\documentclass[11pt]{amsart}

\usepackage[arrow, matrix, curve]{xy}
\usepackage{amscd}
\usepackage{amssymb}
\usepackage{amsthm}
\usepackage{amsmath}
\usepackage{mathrsfs}

\usepackage{stmaryrd}

\newtheorem{theorem}{Theorem}[section]

\theoremstyle{plain}
\newtheorem{definition}[theorem]{Definition}

\newtheorem{prop}[theorem]{Proposition}
\newtheorem{cor}[theorem]{Corollary}
\newtheorem{lemma}[theorem]{Lemma}
\newtheorem{thm}[theorem]{Theorem}
\newtheorem{notation}[theorem]{Notation}

\theoremstyle{definition}

\theoremstyle{remark}
\newtheorem{remark}[theorem]{Remark}

\newcommand{\OC}{\mathcal{O}}

\newcommand{\PP}{\mathcal{P}}
\newcommand{\W}{\mathcal{W}}
\newcommand{\wW}{\widehat{\mathcal{W}}}

\newcommand{\N}{\mathbb{N}}
\newcommand{\Z}{\mathbb{Z}}

\newcommand{\C}{\mathbb{C}}
\newcommand{\K}{\mathbb{K}}

\newcommand{\orho}{\overline{\rho}}
\newcommand{\wb}{\widehat{\mathfrak{b}}}
\newcommand{\wh}{\widehat{\mathfrak{h}}}
\newcommand{\wg}{\widehat{\mathfrak{g}}}
\newcommand{\g}{\mathfrak{g}}
\newcommand{\lb}{\mathfrak{b}}
\newcommand{\h}{\mathfrak{h}}

\newcommand{\wS}{\widehat{S}}
\newcommand{\wR}{\widehat{R}}

\newcommand{\ch}{\mathrm{ch}}
\newcommand{\hc}{\widehat{\mathfrak{h}}_{\mathrm{crit}}^*}

\newcommand{\al}{{\alpha}}
\newcommand{\lam}{{\lambda}}
\newcommand{\Lam}{{\Lambda}}
\newcommand{\del}{{\Delta}}
\newcommand{\rdel}{{\overline{\Delta}}}

%Hom und End
\newcommand{\Hom}{\mathrm{Hom}}

\numberwithin{equation}{section}

%    Absolute value notation

%    Blank box placeholder for figures (to avoid requiring any
%    particular graphics capabilities for printing this document).
\begin{document}

\title[Jantzen Sum formula]{Jantzen sum formula for restricted Verma modules over affine Kac-Moody algebras at the critical level}

%    Information for first author
\author{Johannes K\"ubel*}
%    Address of record for the research reported here
\address{Department of Mathematics, University of Erlangen, Germany}
%    Current address
\curraddr{Cauerstr. 11, 91058 Erlangen, Germany}
\email{kuebel@mi.uni-erlangen.de}
%    \thanks will become a 1st page footnote.
\thanks{*supported by the DFG priority program 1388}

%    General info

\date{\today}

\keywords{critical representations of affine Kac-Moody algebras, Jantzen filtration, category $\OC$}

\begin{abstract}
For a restricted Verma module of an affine Kac-Moody algebra at the critical level we describe the Jantzen filtration and calculate its character. This corresponds to the Jantzen sum formula of a baby Verma module over a modular Lie algebra. This also implies a new proof of the linkage principle which was already derived by Arakawa and Fiebig.
\end{abstract}

\maketitle

\section{Introduction}  

To a simple Lie algebra $\g$ with Cartan subalgebra $\h$ one associates an affine Lie algebra $\wg$ with Cartan subalgebra $\wh$. The root system $R$ of $\g$ can be embedded into the root system $\wR$ of $\wg$. In \cite{3} the authors introduce the category $\overline{\OC}_c$ of restricted representations of $\wg$ at the critical level. Denote by $\OC_c$ the direct summand of the usual highest weight category $\OC$ over $\wg$ which consists of modules with critical level. Then $\overline{\OC}_c$ is the subcategory of $\OC_c$ on which those elements of the Feigin-Frenkel center act by zero which are homogeneous of degree unequal to zero.\\
Conjecturally, the restricted category $\overline{\OC}_c$ should have the same structure as the representation category over a small quantum group or a modular Lie algebra described in \cite{1}. The standard modules in $\overline{\OC}_c$, which should correspond to baby Verma modules in the representation categories of \cite{1}, are the so called \textit{restricted Verma modules}. They are the maximal restricted quotients of the ordinary Verma modules.\\
Towards the description of $\overline{\OC}_c$, Arakawa and Fiebig confirmed the above conjecture in the subgeneric (cf. \cite{2}) and Frenkel and Gaitsgory in the generic case. In \cite{1}, chapter 6, and also in \cite{8} the authors compute a Jantzen sum formula for a baby Verma module $Z(\lam)$ which describes the characters of the Jantzen filtration as an alternating sum formula of certain characters of baby Verma modules of weight "lower" than $\lam$.\\
We deduce the analogous formula for restricted Verma modules at the critical level. To be more precise, let $\lam \in \wh^*$ be a weight of critical level. We introduce the Jantzen filtration 
$$\rdel(\lam)=\rdel(\lam)^0\supset \rdel(\lam)^1 \supset \rdel(\lam)^2 \supset ...$$
and deduce the formula
$$\sum_{\substack{i>0}} \mathrm{ch} \rdel(\lam)^i =  \sum_{\substack{\alpha \in R(\lam)^{+}}} \left(\sum_{\substack{i>0}} \left(\mathrm{ch} \rdel(\alpha \downarrow^{2i-1}\lam)- \mathrm{ch} \rdel(\alpha \downarrow^{2i}\lam)\right)\right)$$
Here $R(\lam)^+\subset R$ denotes the positive integral roots of the finite root system $R$. The notation $\al \downarrow^i\lam$ for $i\leq0$ is defined inductively by $\al \downarrow (\al \downarrow^{i-1}\lam)$ where $\al \downarrow \lam =s_\al \cdot \lam$ if $s_\al \cdot \lam \leq \lam$ and $\al \downarrow \lam =s_{-\al+\delta} \cdot \lam$ if $s_\al \cdot \lam > \lam$. Here $\delta \in \wR$ denotes the smallest positive imaginary root and $s_\al$, $s_{-\al+\delta}$ are the reflections corresponding to the real roots $\al, -\al +\delta$ of the affine Weyl group $\wW$ with its dot-action on $\wh^*$.\\
The strategy to proof the Jantzen sum formula is to deduce the subgeneric cases first and then put these together to get the general result in a very similar manner as in \cite{7}, chapters 5.6 and 5.7. To deduce the subgeneric case we use a result of \cite{2} which states that for $\lam\in \wh^*$ critical and subgeneric the maximal submodule of $\rdel(\lam)$ is isomorphic to the simple module $L(\al\downarrow \lam)$ with highest weight $\al\downarrow \lam$.\\
In \cite{3} the authors introduce projective objects in the restricted category $\overline{\OC}_c$ and a BGGH-reciprocity to deduce the linkage principle for restricted Verma modules conjectured by Feigin and Frenkel. It states that if the simple module with highest weight $\mu \in \wh^*$ appears as a subquotient in a Jordan-H\"older series of $\rdel(\lam)$, where $\lam \in \wh^*$ is critical, then $\mu \in \wW(\lam) \cdot \lam$. Here $\wW(\lam)$ denotes the integral affine Weyl group of the root system $R(\lam)$ corresponding to $\lam$ with its dot-action on $\wh^*$.\\
The linkage principle immediately follows from the Jantzen sum formula and thus gives an independent proof.

\section{Preliminaries}

In this chapter we shortly introduce the construction of an (untwisted) affine Kac-Moody algebra starting with a simple Lie algebra. We collect some facts about the root data, the Weyl group and the invariant bilinear form.

\subsection{Affine Kac-Moody algebras}

Let $\g$ be a simple Lie algebra with a Borel subalgebra $\lb$ and a Cartan subalgebra $\h$. We denote by $R$ the root system with positive roots $R^+$ and by $\Pi$ the simple roots. Moreover, denote by $\W$ the finite Weyl group and by $\kappa: \g \times \g \rightarrow \C$ the Killing form.\\
We take a non-split central extension $\widetilde{\g}$ of the loop algebra $\g \otimes_\C \C[t,t^{-1}]$. As a vector space $\widetilde{\g}$ is the direct sum $(\g \otimes_\C \C[t,t^{-1}]) \oplus \C c$, where $c$ is a central element.\\
Adding a derivation operator $d$ with the property $[d,\cdot] =t \frac{\partial}{\partial t}$, we get the affine Kac-Moody algebra $\wg$ associated to $\g$. As a vector space we have $\wg = (\g \otimes_\C \C[t,t^{-1}]) \oplus \C c \oplus \C d$ and the Lie bracket is given by
$$[c,\wg]= \{0\}$$
$$[d,x\otimes t^n] =nx \otimes t^n$$
$$[x\otimes t^n, y \otimes t^m] = [x,y] \otimes t^{m+n} + n \delta_{m, -n} \kappa(x,y)c$$
where $x,y \in \g$ and $n\in \Z$.
The Borel subalgebra of $\wg$ corresponding to $\lb \subset \g$ is given by
$$\wb = (\g \otimes_\C t\C[t] +\lb \otimes_{\C} \C[t]) \oplus \C c \oplus \C d$$
while the corresponding Cartan subalgebra of $\wg$ is given by 
$$\wh= \h \oplus \C c \oplus \C d.$$

\subsection{Affine roots, Weyl groups and bilinear forms}

For a vector space $V$ we denote by $\left\langle\cdot, \cdot \right\rangle : V^* \times V \rightarrow \C$ the natural pairing with its dual space. Denote by $\leq$ the usual ordering on $\wh^*$. The projection $\wh=\h \oplus \C c\oplus \C d  \rightarrow \h$ induces an embedding $\h^* \subset \wh^*$. By this embedding we can consider all finite roots as elements of $\wh^*$. We define two weights $\Lam_\circ, \delta \in \wh^*$ by the relations
$$\left\langle \delta , \h \oplus \C c \right\rangle = \{0\}$$
$$\left\langle \delta, d \right\rangle =1$$
$$\left\langle \Lam_\circ , \h \oplus \C d \right\rangle =\{0\}$$
$$\left\langle \Lam_\circ , c \right\rangle=1 $$

Then the roots of $\wg$ with respect to $\wh$ are given by $\widehat{R} =\widehat{R}^{\mathrm{re}} \cup \widehat{R}^{\mathrm{im}}$ where 

$$\widehat{R}^{\mathrm{re}}:= \{\al +n \delta\,|\,\al \in R\subset \widehat{R}, \, n \in \Z \}$$
are the \textit{real roots}, and 
$$\widehat{R}^{\mathrm{im}} :=\{n \delta \,|\, n \in \Z, n \neq 0\}$$
the \textit{imaginary roots}. The positive roots $\widehat{R}^+$, i.e., the roots of $\wb$ with respect to $\wh$, are then given by
$$\widehat{R}^+ =R^+ \cup \{\al + n \delta \,|\, \al \in R, n>0\} \cup \{n \delta|n>0\}$$
Denote by $\theta$ the highest root of $R$. Then the set of simple affine roots is given by 
$$\widehat{\Pi} = \Pi \cup \{-\theta +\delta\}\subset \widehat{R}^+$$

For a real root $\al \in \widehat{R}^\mathrm{re}$ we denote by $\al^\vee \in \wh$ its \textit{coroot} which is uniquely defined by the properties $\al^\vee \in [\wg_\al, \wg_{-\al}]$ and $\left\langle \al, \al^\vee \right\rangle=2$.\\
We denote by $\wW \subset \mathrm{Gl}(\wh^*)$ the \textit{affine Weyl group} of the root system $\widehat{R}$, which is the subgroup generated by the reflections $s_\al : \wh^* \rightarrow \wh^*$, $\lam \mapsto \lam - \left\langle \lam, \al^\vee \right\rangle \al$ where $\al \in \widehat{R}^{\mathrm{re}}$ is a real root. We can identify the finite Weyl group $\W$ with the subgroup of $\wW$ generated by the reflections $s_\al$ corresponding to finite roots $\al \in R$. Then $\W$ stabilizes the subspace $\h^* \subset \wh^*$.\\
Let $\orho:= \frac{1}{2}\sum_{\al \in R^+} \al$ be the halfsum of positive finite roots. We then set 
$$\rho:= \orho +h^\vee \Lam_\circ$$
where $h^\vee$ is the dual Coxeter number of $\g$. Then $\left\langle \rho, \al^\vee\right\rangle \neq 0$ for all $\al \in \widehat{R}^\mathrm{re}$ and $\left\langle \rho, c\right\rangle \neq 0$ as well. We can now define the $\rho$-shifted dot-action of $\wW$ on $\wh^*$ by
$$w \cdot \lam:= w(\lam+\rho)-\rho$$
where $w\in \wW$ and $\lam \in \wh^*$.\\
The Killing form $\kappa$ on the simple Lie algebra $\g$ extends to a bilinear form $(\cdot|\cdot):\wg\times \wg \rightarrow \C$ which is given by the following equations
$$(x\otimes t^n| y\otimes t^m)=\delta_{n,-m}\kappa(x,y),$$
$$(c|\g \otimes_\C \C[t,t^{-1}] \oplus \C c)=\{0\},$$
$$(d| \g \otimes_\C \C[t,t^{-1}] \oplus \C d)=\{0\},$$
$$(c|d)=1,$$
for $x,y \in \g$, $m,n \in \Z$. It is non-degenerate, symmetric and invariant, i.e., $([x,y]|z)=(x|[y,z])$ for all $x,y,z \in \wg$. Furthermore, it induces a non-degenerate bilinear form on the affine Cartan subalgebra and thus an isomorphism $\nu:\wh \stackrel{\sim}{\rightarrow} \wh^*$ which coincides with the isomorphism $\h \stackrel{\sim}{\rightarrow} \h^*$ induced by the Killing form, when restricted to the finite Cartan subalgebra, and which sends $c$ to $\delta$ and $d$ to $\Lam_\circ$. So the induced form on $\wh^*$ is given by
$$(\al| \beta) =\kappa(\al,\beta)$$
$$(\Lam_\circ| \h^* \oplus \C \Lam_\circ)=\{0\},$$
$$(\delta| \h^* \oplus \C \delta)=\{0\},$$
$$(\Lam_\circ| \delta)=1,$$
for $\al, \beta \in \h^*$ and $\kappa$ the induced Killing form on $\h^*$. The induced form is invariant under the linear action of the affine Weyl group, i.e.,
$$(w(\lam)|w(\mu))=(\lam|\mu)$$
for $\lam, \mu \in \wh^*$, $w \in\wW$.

\section{Verma modules}

For a Lie algebra $\mathfrak{a}$ we denote by $U(\mathfrak{a})$ its universal enveloping algebra. For $\lam \in \wh^*$ let $\C_\lam$ be the one dimensional representation of $U(\wb)$ on which $\wh$ acts by the character $\lam$ and $[\wb,\wb]$ by zero. Then the \textit{Verma module with highest weight $\lam$} is defined by
$$\del(\lam):= U(\wg)\otimes_{U(\wb)} \C_\lam$$
It has a unique simple subquotient which we denote by $L(\lam)$ and both modules are highest weight modules with highest weight $\lam$.

\subsection{Deformed Verma modules}

Denote by $\wS:=S(\wh)=U(\wh)$ the symmetric algebra over the vector space $\wh$ and by $S=S(\h)$ the symmetric algebra over the vector space $\h$. Then the projection $\wh \rightarrow \h$ induces a homomorphism $\wS \rightarrow S$ and equips $S$ with an $\wS$-algebra structure. We call a commutative, unital, noetherian $\wS$-algebra with structure morphism $\tau:\wS \rightarrow A$ a \textit{deformation algebra}.\\
For a Lie algebra $\mathfrak{a}$ we set $\mathfrak{a}_A:= \mathfrak{a}\otimes_\C A$. Then we can identify $(\wh_A)^*=\Hom_A(\wh_A,A)$ with $\wh^* \otimes_\C A$ and any weight $\lam \in \wh^*$ induces a weight $\lam \otimes 1 \in \wh_A^*$ which we simply denote by $\lam$ again. In this way, the composition $\wh \hookrightarrow \wS \stackrel{\tau}{\rightarrow} A$ induces the \textit{canonical weight} $\tau \in \wh_A^*$.\\
For $\lam \in \wh^*$ let $A_\lam$ be the $\wb_A$-module that is $A$ as an $A$-module and on which $\wh$ acts via the character $\lam+\tau$ and $[\wb,\wb]$ by zero. We then define the \textit{deformed Verma module with highest weight} $\lam$ by
$$\del_A(\lam +\tau) := U(\wg_A) \otimes_{U(\wb_A)} A_\lam$$
For an $\wh_A$-module $M$ and $\lam \in \wh^*$ we define the \textit{deformed weight space} of $\lam$ by
$$M_\lam := \{m\in M\,|\, Hm =(\lam + \tau)(H) m \,\mathrm{for}\,\mathrm{all} \, H \in \wh_A \}$$
Then the deformed Verma module $\del_A(\lam + \tau)$ decomposes into the direct sum of its weight spaces $\del_A(\lam +\tau)_\mu$ with $\mu \in \wh^*$, such that $\del_A(\lam +\tau)_\mu \neq 0$ implies $\mu \leq \lam$.\\
If $A \rightarrow A'$ is a homomorphism of deformation algebras with structure maps $\tau:\wS \rightarrow A$ and $\tau':\wS \rightarrow A\rightarrow A'$, then
$$\del_A (\lam +\tau) \otimes_A A' \cong \del_{A'} (\lam + \tau').$$
Note that for $\tau: \wS \rightarrow \C$ the surjection on the quotient $\C \cong \wS/\wh \wS$ we have $\del_\C(\lam +\tau) \cong \del(\lam)$.

\subsection{Characters}

Let $\Z[\wh^*]=\bigoplus_{\lam \in \wh^*} \Z e^\lam$ be the group algebra of $\wh^*$. We define a certain completion by $\widehat{\Z[\wh^*]} \subset \prod_{\lam \in \wh^*} \Z e^\lam$ to be the subgroup of elements $(c_\lam)$ with the property that there exists a finite subset $\{\mu_1,...,\mu_n\} \subset \wh^*$ such that $c_\lam \neq 0$ implies $\lam \leq \mu_i$ for at least one $i$. Let $M\in \wg$-mod be semi-simple over $\wh$ with the properties that each weight space $M_\lam$ is finite dimensional and that there exists $\mu_1,...,\mu_n \in \wh^*$ such that $M_\lam  \neq 0$ implies $\lam \leq \mu_i$ for at least one $i$. Then we define the \textit{character} of $M$ as an element in $\widehat{\Z[\wh^*]}$ which is given by the formal sum
$$\ch M := \sum_{\lam \in \wh^*} (\mathrm{dim}_\C M_\lam) e^\lam$$
We define the \textit{generalized Kostant partition function} $\PP : \Z\widehat{R}^+ \rightarrow \N$ by
$$\PP(\nu):=\begin{cases}  \mathrm{dim}_\C \del(0)_\nu , \, \, \mathrm{if}\, \, \nu \in \N\widehat{R}^+ \\ 0, \, \, \mathrm{otherwise} \end{cases}$$

The name partition function comes from the combinatorial description of the formula
$$\ch \del(\lam)= \prod_{\al \in \widehat{R}^+}(1+e^{-\al} + e^{-2\al}+...)^{\mathrm{dim}\wg_\al}$$

\section{Restricted Verma modules}

\subsection{An equivalence relation}\label{KK}

For a deformation algebra $A$ with canonical weight $\tau: \wh_A \rightarrow A$, we extend the bilinear form $(\cdot|\cdot):\wh^* \times \wh^* \rightarrow \C$ to an $A$-linear continuation $(\cdot| \cdot)_A: \wh_A^* \times \wh_A^* \rightarrow A$. \\
If $A = \K$ is a field, we denote by $\preceq_\K$ the partial ordering on $\wh^*$ which is generated by $\nu \preceq_\K \mu$ if there exists an $n \in \N$ and an $\al \in \widehat{R}^+$, such that $2(\lam +\rho+\tau| \al)_\K=n(\al| \al)_\K$ and $\nu =\lam -n \al$. Then $\preceq_\K$ is a refinement of the usual ordering $\leq$ on $\wh^*$. We denote by $\sim_\K$ the equivalence relation on $\wh^*$ which is generated by $\preceq_\K$.\\
Let $L_\K(\lam+\tau)$ be the unique simple quotient of $\del_\K(\lam +\tau)$ and denote by $[\del_\K(\lam+\tau):L_\K(\mu+\tau)]$ the number of subquotients of a Jordan-H\"older series of $\del_\K(\lam+\tau)$ which are isomorphic to $L_\K(\mu +\tau)$.

\begin{thm}[\cite{6}, Theorem 2]\label{kk}
We have $[\del_\K(\lam +\tau):L_\K(\mu +\tau)]\neq 0$ if and only if $\mu \preceq_\K \lam$.
\end{thm}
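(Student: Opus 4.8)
\emph{Overall strategy.}
The statement is a deformed form of the Kac--Kazhdan theorem, and the plan is to prove it through the contravariant (Shapovalov) form together with the Shapovalov--Kac--Kazhdan determinant formula. Recall that over any deformation algebra $A$ the module $\del_A(\lam+\tau)$ carries a canonical $A$-bilinear form, contravariant for the Chevalley anti-involution, equal to $1$ on the highest weight line and compatible with base change; when $A=\K$ is a field its radical is the maximal proper submodule, so that $L_\K(\lam+\tau)$ is the cokernel. The computational backbone is that on the weight space $\del_A(\lam+\tau)_\mu$ the determinant of this form equals, up to a unit of $A$,
$$\prod_{\al\in\widehat R^{+}}\ \prod_{n\geq 1}\Big(2(\lam+\rho+\tau\,|\,\al)_A-n(\al\,|\,\al)_A\Big)^{\dim\wg_\al\cdot\PP(\lam-\mu-n\al)} .$$
It suffices to prove this over $\wS$, where it is an identity in a polynomial ring obtained in the classical way — comparing degrees and leading terms, using the $\mathfrak{sl}_2$-triple attached to a real root and the Kac--Kazhdan singular vectors for imaginary roots to locate the irreducible factors, and Jantzen's filtration argument to fix their exponents — and then specialising along $\wS\to A$. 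The two implications are handled separately.

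\emph{The easy implication.}
Assume $\mu\preceq_\K\lam$. By transitivity of $\preceq_\K$ it is enough, for a single generating step $\nu=\mu-n\al$ with $2(\mu+\rho+\tau\,|\,\al)_\K=n(\al\,|\,\al)_\K$, to exhibit a nonzero homomorphism $\del_\K(\nu+\tau)\to\del_\K(\mu+\tau)$. If $\al$ is a real root then $(\al\,|\,\al)_\K\neq 0$, the condition reads $\langle\mu+\rho+\tau,\al^\vee\rangle=n\in\N$, so $\nu+\tau=s_\al\cdot(\mu+\tau)\leq\mu+\tau$, and a singular vector of that weight in $\del_\K(\mu+\tau)$ is produced from the $\mathfrak{sl}_2$ attached to $\al$ (iterated along a reduced word, as by Bernstein--Gelfand--Gelfand, when $n$ is large). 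If $\al=m\delta$ is imaginary the condition forces $(\mu+\rho+\tau\,|\,\delta)_\K=0$, i.e., the critical level, and then the Kac--Kazhdan singular vectors at the weights $\mu+\tau-k\delta$ supply the homomorphism. Since a nonzero homomorphism of Verma modules over a field is injective (the deformed analogue of Verma's theorem), composing these maps along a chain that witnesses $\mu\preceq_\K\lam$ embeds $\del_\K(\mu+\tau)$ into $\del_\K(\lam+\tau)$, and its simple quotient $L_\K(\mu+\tau)$ is therefore a composition factor of $\del_\K(\lam+\tau)$.

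\emph{The hard implication, and the main obstacle.}
For the converse I would use the Jantzen filtration (if $\wS\to\K$ is injective the determinant above is already a unit, $\del_\K(\lam+\tau)$ is simple and $\mu=\lam$, so assume it is not). Localising $\wS$ at a height-one prime $\mathfrak p$ contained in $\ker(\wS\to\K)$ gives a discrete valuation ring over which $\del(\lam+\tau)$ is defined and becomes simple after inverting the uniformiser, since the determinant above is a nonzero polynomial; the Jantzen filtration $\del(\lam+\tau)=\del(\lam+\tau)^0\supseteq\del(\lam+\tau)^1\supseteq\dots$ of the special fibre then satisfies $\del(\lam+\tau)^0/\del(\lam+\tau)^1\cong L(\lam+\tau)$ and the sum formula
$$\sum_{i\geq 1}\ch\del(\lam+\tau)^{i}=\sum_{\al\in\widehat R^{+}}\ \sum_{n\geq 1}\ \nu_{\mathfrak p}\!\Big(2(\lam+\rho+\tau\,|\,\al)-n(\al\,|\,\al)\Big)\,\dim\wg_\al\;\ch\del(\lam-n\al+\tau) .$$
Hence every composition factor of $\del(\lam+\tau)$ other than the head already occurs in some $\del(\lam-n\al+\tau)$ for which $2(\lam+\rho+\tau\,|\,\al)-n(\al\,|\,\al)\in\mathfrak p$, that is, at a highest weight $\preceq_\K$-below $\lam$; the one-step situation here is subgeneric (length two, with simple socle, as in the $\mathfrak{sl}_2$-theory), and iterating this over the relevant primes $\mathfrak p$ should give $\mu\preceq_\K\lam$. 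I expect the genuine difficulty to be the termination of this descent: below $\lam$ the ordering $\leq$ is not well-founded, the obstruction being exactly the imaginary, critical steps $\lam-k\delta$ — all $\preceq_\K$-below $\lam$ in one step, and with $\del(\lam-k\delta+\tau)$ merely the $k\delta$-shift of $\del(\lam+\tau)$. One therefore has to split off the $\delta$-direction using this degeneracy and organise the remaining descent inside a well-founded poset — for instance a finite interval in the Bruhat order of the integral affine Weyl group, to which the occurring weights are confined once the block decomposition is available. Granting this, the descent terminates and yields $\mu\preceq_\K\lam$.
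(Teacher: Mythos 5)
The paper gives no proof of Theorem \ref{kk} at all: it is imported verbatim from Kac--Kazhdan \cite{6}, Theorem 2 (in the deformed form standard in the literature on deformed category $\OC$, cf. \cite{2}), so there is no internal argument to compare yours with. Measured against the classical proof, your overall strategy --- contravariant form, Shapovalov--Kac--Kazhdan determinant, Jantzen-type filtration over a one-parameter deformation --- is exactly the right skeleton, but as written the sketch has genuine gaps, one of which you flag yourself.

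First, the ``hard'' implication is left unfinished: you say ``granting this, the descent terminates''. The worry you raise (non-well-foundedness of $\leq$ below $\lam$, the imaginary steps $\lam-k\delta$) is actually not the obstacle, and the appeal to ``a finite Bruhat interval once the block decomposition is available'' would be circular, since the linkage statement is precisely what is being proved. The standard fix is an induction on the height of $\lam-\mu\in\N\wR^+$, which is a fixed finite quantity: if $[\del_\K(\lam+\tau):L_\K(\mu+\tau)]\neq 0$ and $\mu\neq\lam$, the sum formula places $L_\K(\mu+\tau)$ in some $\del_\K(\lam-n\al+\tau)$ with $\lam-n\al\preceq_\K\lam$ in one step and $\mu\leq\lam-n\al<\lam$, so the height strictly drops; imaginary steps cause no problem because they too strictly decrease the height towards the fixed $\mu$. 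Second, in the ``easy'' implication you invoke ``the Kac--Kazhdan singular vectors'' for imaginary roots at the critical level; the existence of those singular vectors (equivalently, of nonzero maps $\del_\K(\mu-k\delta+\tau)\to\del_\K(\mu+\tau)$) is itself one of the substantial results of \cite{6} (resp.\ Frenkel--Gaitsgory in the strong form used in this paper), obtained by a generic-point-plus-specialization argument from the determinant; it cannot simply be cited inside a proof of the same theorem, and likewise the real-root case for large $n$ is not a naive $\mathfrak{sl}_2$ iteration but requires the density/limiting argument for the existence of $\del_\K(s_\al\cdot(\mu+\tau))\hookrightarrow\del_\K(\mu+\tau)$. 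Third, in the deformed setting your Jantzen filtration is built over a localization $\wS_{\mathfrak p}$, whose special fibre is the residue field of $\mathfrak p$, not $\K$; to transfer composition-factor information to $\del_\K(\lam+\tau)$ you need an explicit base-change step (compatibility of the form, its determinant, and simple subquotient multiplicities under $\wS_{\mathfrak p}\to\K$), which is exactly the bookkeeping that makes the deformed version of \cite{6} a statement requiring its own (short but nontrivial) argument rather than a formal corollary.
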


For $\lam \in \wh^*$ we define the \textit{integral roots} (with respect to $\lam$ and $A$) by
$$\widehat{R}_A(\lam):= \{\al \in \widehat{R} \, |\, 2(\lam +\rho +\tau| \al)_A \in \Z(\al|\al)_A\}$$
and the corresponding \textit{integral Weyl group} by
$$\wW_A(\lam):=\left\langle s_\al \,|\, \al \in \widehat{R}_A(\lam)\cap \widehat{R}^\mathrm{re}\right\rangle \subset \wW$$
We write $\wR_A(\lam)^+=\wR^+ \cap \wR_A(\lam)$ and $\wR(\lam)=\wR_\C(\lam)$ in case $\tau:\wS \rightarrow \wS /\wh \wS\cong \C$ is the quotient map and similarly $\wW(\lam)=\wW_\C(\lam)$.

\subsection{The critical level}

For $\lam \in \wh^*$ we define the \textit{level} of $\lam$ to be the complex number $\lam(c) \in \C$. If $\lam \sim_\K \mu$, we have $\lam(c)=\mu(c)$. Therefore, the equivalence class $\Lam$ of $\lam$ has a well-defined level and we have $\nu(c)= (\nu|\delta)$ for all $\nu \in \wh^*$.

\begin{lemma}[\cite{2}, Lemma 4.2]
For $\Lam \in \wh^*/\sim_\K$ the following are equivalent.
\begin{enumerate}
\item We have $\lam(c)= -\rho(c)$ for some $\lam \in \Lam$.
\item We have $\lam(c)= -\rho(c)$ for all $\lam \in \Lam$.
\item We have $\lam + \delta \in \Lam$ for all $\lam \in \Lam$.
\item We have $n \delta \in \wR_\K(\lam)$ for all $n \neq0$ and $\lam \in \Lam$.
\end{enumerate}
\end{lemma}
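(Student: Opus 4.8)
The plan is to establish $(1)\Leftrightarrow(2)$, $(2)\Leftrightarrow(4)$, $(2)\Rightarrow(3)$ and $(3)\Rightarrow(4)$; together these give all four equivalences. The equivalence $(1)\Leftrightarrow(2)$ is immediate from the fact, already recorded above, that the level $\lam(c)$ is constant on each $\sim_\K$-class, so that it equals $-\rho(c)$ for one $\lam\in\Lam$ precisely when it does so for every $\lam\in\Lam$.

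For $(2)\Leftrightarrow(4)$ I would argue by a direct computation with the bilinear form. Since $\delta$ is imaginary we have $(\delta|\delta)_\K=0$, hence $(n\delta|n\delta)_\K=0$ and $2(\lam+\rho+\tau|n\delta)_\K=2n(\lam+\rho+\tau|\delta)_\K$ for all $n$. Therefore, for $n\neq 0$, the condition $n\delta\in\wR_\K(\lam)$ is equivalent to $(\lam+\rho+\tau|\delta)_\K=0$, independently of $n$; and by $\nu(c)=(\nu|\delta)$ this last equality amounts exactly to the level of $\lam$ being critical, that is, to $(2)$.

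For $(2)\Rightarrow(3)$ I would exhibit a single generating step of $\sim_\K$. Let $\lam\in\Lam$; by $(2)$ we have $(\lam+\rho+\tau|\delta)_\K=0$, so taking $\al:=\delta\in\wR^+$ and $n:=1$ gives both $2((\lam+\delta)+\rho+\tau|\al)_\K=2(\lam+\rho+\tau|\delta)_\K+2(\delta|\delta)_\K=0=n(\al|\al)_\K$ and $(\lam+\delta)-n\al=\lam$. Hence $\lam\preceq_\K\lam+\delta$, so $\lam+\delta\in\Lam$; as $\lam\in\Lam$ was arbitrary, $(3)$ follows.

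The substantive point is $(3)\Rightarrow(4)$. Assuming $\lam+\delta\in\Lam$, choose a chain $\lam=\nu_0,\nu_1,\dots,\nu_k=\lam+\delta$ in which each consecutive pair is linked by a generating relation of $\preceq_\K$ in one direction or the other, and track the quantity $f(\nu):=(\nu+\rho+\tau|\delta)_\K$ along it. A link through a real root $\al$ is, by its defining equation, of the form $\nu_i+\rho+\tau=s_\al(\nu_{i-1}+\rho+\tau)$; since $s_\al\in\wW$ fixes $\delta$ and preserves the form, $f(\nu_i)=f(\nu_{i-1})$. A link through an imaginary root $m\delta$ changes the weight only by a multiple of $\delta$, so again $f$ is unchanged, while its defining equation $2(\cdot+\rho+\tau|m\delta)_\K=n(m\delta|m\delta)_\K=0$ forces $f$ to vanish at that link's source. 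Hence $f$ is constant along the chain, and is $0$ as soon as some link is imaginary. If instead every link is real, then composing the corresponding reflections produces $w\in\wW$ with $w(\lam+\rho+\tau)=\lam+\delta+\rho+\tau$, and invariance of the form gives $(\lam+\rho+\tau|\lam+\rho+\tau)_\K=(\lam+\delta+\rho+\tau|\lam+\delta+\rho+\tau)_\K=(\lam+\rho+\tau|\lam+\rho+\tau)_\K+2f(\lam)$, whence $f(\lam)=0$ again, using $(\delta|\delta)_\K=0$. In all cases $(\lam+\rho+\tau|\delta)_\K=0$, which is $(4)$ by the computation above. I expect this dichotomy to be the crux: either the chain already contains an imaginary link, which pins down the level directly, or it consists entirely of real links, which assemble into a single element of $\wW$, so that the invariance of $(\cdot|\cdot)_\K$ together with the isotropy of $\delta$ can be applied.
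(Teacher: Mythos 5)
The paper itself offers no proof of this lemma --- it is quoted from \cite{2} --- so there is no in-paper argument to compare against; judged on its own terms, your proof is essentially correct and complete. The skeleton is sound: $(1)\Leftrightarrow(2)$ from constancy of the level on $\sim_\K$-classes, the computation $(n\delta|n\delta)_\K=0$ showing that $n\delta\in\wR_\K(\lam)$ for some (equivalently every) $n\neq 0$ amounts to $(\lam+\rho+\tau|\delta)_\K=0$, the single elementary move $\lam=(\lam+\delta)-\delta$ for $(2)\Rightarrow(3)$, and the zigzag analysis for $(3)\Rightarrow(4)$. In that last step, note that the constancy of $f(\nu)=(\nu+\rho+\tau|\delta)_\K$ along the chain needs no case distinction at all: every root, real or imaginary, is orthogonal to $\delta$, so each elementary move changes the weight by an element of $\wh^*$ orthogonal to $\delta$ and leaves $f$ unchanged. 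The genuine content is exactly your dichotomy at the end --- an imaginary link forces $f=0$ at its source, while an all-real chain composes to some $w\in\wW$ with $w(\lam+\rho+\tau)=\lam+\delta+\rho+\tau$, and $\wW$-invariance of the form together with $(\delta|\delta)_\K=0$ then kills $f(\lam)$ --- and both branches are carried out correctly.

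One caveat you should make explicit: in identifying $(\lam+\rho+\tau|\delta)_\K=0$ with ``$\lam$ has critical level'' (used in $(2)\Leftrightarrow(4)$ and again in $(2)\Rightarrow(3)$) you silently drop the term $(\tau|\delta)_\K=\tau(c)$. This vanishes precisely when the structure map $\wS\rightarrow\K$ annihilates $c$, e.g.\ when it factors through $S$; that is the setting of \cite{2} and of every deformation algebra to which the lemma is applied in this paper, but it is not imposed in the general definition of $\preceq_\K$ here. Without it the lemma as stated fails: take $\K=\C$ with $\tau$ given by evaluation at a weight of nonzero level; then for critical $\lam$ one has $(\lam+\rho+\tau|\delta)_\K=\tau(c)\neq 0$, so no $n\delta$ lies in $\wR_\K(\lam)$ and $\lam+\delta$ need not lie in $\Lam$, although (1) and (2) hold. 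So record $\tau(c)=0$ as a standing hypothesis (it is implicit in the source); with that stated, your argument goes through as written, and the cycle $(1)\Leftrightarrow(2)$, $(2)\Leftrightarrow(4)$, $(2)\Rightarrow(3)\Rightarrow(4)$ does yield all four equivalences.
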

We call $\mathrm{crit}:= -\rho(c)$ the critical level.\\
Denote by $\wh^*_{\mathrm{crit}}$ the hyper plane which consists of all $\lam \in \wh^*$ with $\lam(c)=\mathrm{crit}$. Then for each $\lam \in \wh^*_{\mathrm{crit}}$ we have $(\lam + \rho| \delta)=0$.

\subsection{Restricted Verma modules}

Let $\lam \in \wh^*$ and $\tau: \wS \rightarrow A$ be a deformation algebra with a structure map that factors through the restriction map $\wS \rightarrow S$. This implies that $\tau(c)=\tau(d)=0$. We define $\del_A^-(\lam +\tau)$ to be the submodule of $\del_A(\lam +\tau)$ which is generated by the images of all homomorphisms $\del_A(\lam-n\delta +\tau) \rightarrow \del_A(\lam+\tau)$ for $n \in \N_{>0}$. Since $\tau(c)=\tau(d)=0$, we have $(\tau| \delta)=0$ and by Theorem \ref{kk} there is a map $\del_A(\lam-n\delta +\tau) \rightarrow \del_A(\lam+\tau)$ for every $n>0$, which is unequal to zero if and only if the level of $\lam$ is critical. If $\lam$ is non-critical, we get $\del_A^-(\lam +\tau)=\{0\}$.\\
We now define the \textit{restricted Verma module} as the quotient
$$\rdel_A(\lam +\tau)= \del_A(\lam+\tau)/\del_A^-(\lam+\tau)$$

\begin{remark}
Denote by $V^{\mathrm{crit}}(\g)$ the universal affine vertex algebra at the critical level and denote by $\mathfrak{z}$ its center. Then each smooth $[\wg,\wg]$-module $M$ carries the structure of a graded $\mathfrak{z}$-module. By a theorem of Feigin-Frenkel (cf. \cite{4}) $\mathfrak{z}$ yields an action on $M$ by the graded polynomial ring generated by infinitely many homogeneous elements 
$$\mathcal{Z}=\C[p_s^{(i)}\,|\,i=1,...,l, s \in \Z]=\bigoplus_{n \in \Z} \mathcal{Z}_n$$
Now a theorem of Frenkel and Gaitsgory (cf. \cite{5}) shows that for any critical weight $\lam \in \wh^*$ and $n<0$ there is a surjective map $\mathcal{Z}_n \rightarrow \Hom_{\wg}(\del(\lam+n\delta), \del(\lam))$. Thus, the restricted Verma module $\rdel(\lam)$ coincides with the quotient 
$$\del(\lam)^\mathrm{res}:=\del(\lam)/\sum_{\substack{n<0}}\mathcal{Z}_n \del(\lam).$$
\end{remark}

Let $\overline{\cdot}: \wh^* \twoheadrightarrow \h^*$ be the map induced by $\h \hookrightarrow \wh$. For any subset $\Lam \subset \wh^*$ we denote by $\overline{\Lam} \subset \h^*$ its image under $\overline{\cdot}$.

\begin{definition}
Let $\Lam \in \wh^*_\mathrm{crit}/\sim_\K$ be a critical equivalence class. We call $\Lam$
\begin{enumerate}
\item \textit{generic} if $\overline{\Lam} \subset \h^*$ contains exactly one element.
\item \textit{subgeneric} if $\overline{\Lam} \subset \h^*$ contains exactly two elements.
\end{enumerate}
\end{definition}

We call any weight contained in a generic (subgeneric, resp.) equivalence class a generic (subgeneric, resp.) weight. If $\Lam$ is subgeneric, there is a weight $\overline{\lam} \in \h^*$ and a finite root $\al \in R$ such that $\overline{\Lam}=\{\overline{\lam}, s_\al \cdot \overline{\lam}\}$.\\
Let $\lam \in \wh^*_\mathrm{crit}$ be a critical weight. Similarly to the integral roots of $\lam$ we now define the \textit{finite integral root system} (with respect to $\lam$ and the deformation algebra $A$) by
$$R_A(\lam):= \wR_A(\lam) \cap R=\{\al \in R\,|\, 2(\lam+\rho +\tau| \al)_A \in \Z(\al| \al)_A \}$$
and the \textit{finite integral Weyl group} by 
$$\W_A(\lam)=\wW_A(\lam) \cap \W$$
Again we write $R_A(\lam)^+=R^+\cap R_A(\lam)$ and $R(\lam)=R_\C(\lam)$ if the deformation algebra is $\C$.
For $\lam \in \wh^*_\mathrm{crit}$ and $\al \in R_A(\lam)$, such that $s_\al \cdot \lam \neq \lam$, we have either $s_\al \cdot \lam < \lam$ or $s_{-\al +\delta} \cdot \lam <\lam$. We define $\al \downarrow \lam$ to be the element in the set $\{s_\al \cdot \lam, s_{-\al+\delta}\cdot \lam\}$ which is smaller than $\lam$. Furthermore, we define inductively $\al \downarrow^n \lam:=\al \downarrow(\al \downarrow ^{n-1}\lam)$. In case $s_\al \cdot \lam=\lam$ we have $\al \downarrow \lam =\lam$.

\begin{thm}(\cite{2}, Corollary 4.10)\label{thmpa}
Let $\lam \in \wh^*_\mathrm{crit}$ and $\tau: \wS \rightarrow \K$ be a deformation algebra that is a field with structure map $\tau$, which factors over $S$.
\begin{enumerate}
\item If $\lam$ is generic, $\rdel_\K(\lam +\tau)$ is simple.
\item If $\lam$ is subgeneric with $R_\K(\lam)=\{\pm \al\}$, we have a short exact sequence
$$L_\K(\al \downarrow \lam) \hookrightarrow \rdel_\K(\lam) \twoheadrightarrow L_\K(\lam).$$
\end{enumerate}
\end{thm}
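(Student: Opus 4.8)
Here is how I would prove this. I would rely on two facts together with some root combinatorics. The first is Theorem~\ref{kk}: the Jordan--H\"older content of a deformed Verma module $\del_\K(\lam+\tau)$ is $\{L_\K(\mu+\tau)\mid\mu\preceq_\K\lam\}$. The second is the description of $\del^-_\K(\lam+\tau)$ recalled in the Remark above: with $\ZZ=\C[p_s^{(i)}\mid i=1,\dots,l,\ s\in\Z]$ one has $\del^-_\K(\lam+\tau)=\sum_{n>0}\ZZ_{-n}\cdot\del_\K(\lam+\tau)$, and by \cite{4} and \cite{5} the module $\del_\K(\lam+\tau)$ is free over the subalgebra $\C[p_s^{(i)}\mid s<0]$, so that $\rdel_\K(\lam+\tau)$ is its fibre at the augmentation ideal and
$$\ch\rdel_\K(\lam+\tau)\;=\;e^{\lam}\prod_{\al\in\wR^{\mathrm{re}}\cap\wR^+}(1-e^{-\al})^{-1};$$
in particular $\rdel_\K(\lam+\tau)$ is a restricted highest weight module of highest weight $\lam$ with $[\rdel_\K(\lam+\tau):L_\K(\lam+\tau)]=1$. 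For the combinatorics: since $\tau$ factors through $S=S(\h)$ we have $(\tau|\delta)_\K=0$, since $\lam$ is critical $(\lam+\rho|\delta)=0$, and $(\delta|\delta)=0=(\beta|\delta)$ for $\beta\in R$. Hence every imaginary root lies in $\wR_\K(\lam)$, a real root $\beta+m\delta$ lies in $\wR_\K(\lam)$ iff $\beta\in R_\K(\lam)$, and the defining identity of a $\delta$-shift $\mu\mapsto\mu-k\delta$ in $\preceq_\K$ reads $0=0$, so such shifts are always available, whereas a step along a real root moves the finite part $\overline{\cdot}\colon\wh^*\twoheadrightarrow\h^*$. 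As a $\preceq_\K$-chain from $\lam$ stays inside its $\sim_\K$-class, and so has finite parts inside $\overline{\Lam}$, we conclude: if $\lam$ is generic ($\#\overline{\Lam}=1$) then $\mu\preceq_\K\lam$ forces $\mu=\lam-k\delta$ ($k\geq0$), so the constituents of $\del_\K(\lam+\tau)$ are exactly the $L_\K(\lam-k\delta+\tau)$; if $\lam$ is subgeneric with $R_\K(\lam)=\{\pm\al\}$ then $\wR_\K(\lam)=\{\pm\al+m\delta\}\cup\{m\delta\mid m\neq0\}$ is of affine type $A_1$, its dot-orbit block below $\lam$ is the strictly decreasing chain $\lam>\al\downarrow\lam>\al\downarrow^2\lam>\cdots$, and the constituents of $\del_\K(\lam+\tau)$ are the $L_\K$ at the $\delta$-translates of these weights that are $\leq\lam$.

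For part (1): the constituents of $\rdel_\K(\lam+\tau)$ lie among the $L_\K(\lam-k\delta+\tau)$, $k\geq0$, with the top one of multiplicity one, so it suffices to show $\del^-_\K(\lam+\tau)$ already equals the maximal submodule $\mathrm{rad}\,\del_\K(\lam+\tau)$. One inclusion is automatic, and for the other the easy half is that any $\wh$-primitive vector of $\del_\K(\lam+\tau)$ of weight $\lam-j\delta$ ($j\geq1$) generates, by the universal property of Verma modules, a quotient of $\del_\K(\lam-j\delta+\tau)$ and hence lies in $\del^-_\K(\lam+\tau)$; so what remains is that $\mathrm{rad}\,\del_\K(\lam+\tau)$ be generated by its $\wh$-primitive vectors, equivalently that every constituent $L_\K(\lam-k\delta+\tau)$, $k\geq1$, of $\del_\K(\lam+\tau)$ already occurs inside $\del^-_\K(\lam+\tau)$. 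This is the one genuinely non-formal point, supplied by the analysis of the centre in \cite{2}; granting it, $\rdel_\K(\lam+\tau)=L_\K(\lam+\tau)$ is simple.

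For part (2): I would first produce the submodule. As $\al\in R_\K(\lam)$ and, by subgenericity, $s_\al\cdot\lam\neq\lam$, the weight $\al\downarrow\lam$ arises from $\lam$ by a single real-root reflection, so Theorem~\ref{kk} yields a nonzero --- hence injective --- homomorphism $\del_\K(\al\downarrow\lam+\tau)\hookrightarrow\del_\K(\lam+\tau)$. Its image is not contained in $\del^-_\K(\lam+\tau)$: one checks $\al\downarrow\lam\not\leq\lam-\delta$ --- the $\delta$-coefficient of $\lam-(\al\downarrow\lam)$ is $0$ if $\al\downarrow\lam=s_\al\cdot\lam$ and too small to absorb a further $-\delta$ inside $\N\wR^+$ if $\al\downarrow\lam=s_{-\al+\delta}\cdot\lam$ --- so $L_\K(\al\downarrow\lam+\tau)$ is not a constituent of any $\del_\K(\lam-m\delta+\tau)$, $m\geq1$, hence not of $\del^-_\K(\lam+\tau)$. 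Therefore the composite $\del_\K(\al\downarrow\lam+\tau)\to\del_\K(\lam+\tau)\to\rdel_\K(\lam+\tau)$ is nonzero, and its image $Q$ is a nonzero restricted quotient of $\rdel_\K(\al\downarrow\lam+\tau)$. A character count in the affine-$A_1$ block --- using the formula for $\ch\rdel_\K(\lam+\tau)$, the constituents listed above, and the $\del^-$-analysis of part (1) to kill every $L_\K(\lam-k\delta+\tau)$, $k\geq1$ --- shows $\rdel_\K(\lam+\tau)$ has exactly the two constituents $L_\K(\lam+\tau)$ and $L_\K(\al\downarrow\lam+\tau)$, each once. Hence $Q\cong L_\K(\al\downarrow\lam+\tau)$, and $\rdel_\K(\lam+\tau)/Q$ --- a highest weight module of highest weight $\lam$ with sole constituent $L_\K(\lam+\tau)$ --- equals $L_\K(\lam+\tau)$, which is the asserted short exact sequence.

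I expect the real obstacle to be exactly the fine control of $\del^-_\K(\lam+\tau)$ that both parts reduce to: the character identity for $\rdel_\K(\lam+\tau)$ and the fact that $\del^-_\K(\lam+\tau)$ exhausts the maximal submodule of $\del_\K(\lam+\tau)$ in the imaginary direction. This is not formal --- a primitive vector of a quotient need not lift to a primitive vector, and the undeformed blocks are infinite along $\Z\delta$ --- and it is where \cite{2} brings in the Feigin--Frenkel description of the centre and a deformation/base-change argument over the localizations $\wS_{(\mathfrak{p})}$, reducing to a rank-one affine-$A_1$ situation before specialising to $\K$. The remaining ingredients are the root bookkeeping above and routine highest-weight-module manipulations.
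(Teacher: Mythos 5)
The paper never proves this statement: Theorem \ref{thmpa} is quoted verbatim from \cite{2}, Corollary 4.10, and is then used as an input (for Lemma \ref{lem1} and Proposition \ref{prop1}), so your argument has to stand on its own --- and it does not. At both decisive points you defer to ``the analysis of the centre in \cite{2}'', which is precisely the substance of the result you are asked to prove. In part (1) you explicitly grant that every constituent $L_\K(\lam-k\delta+\tau)$, $k\geq1$, of $\del_\K(\lam+\tau)$ already occurs in $\del^-_\K(\lam+\tau)$. In part (2) the asserted ``character count'' cannot be carried out from the ingredients you list: knowing $\ch\rdel_\K(\lam+\tau)$ and the list of candidate constituents $L_\K(\al\downarrow^n\lam-k\delta+\tau)$ does not bound multiplicities unless you already know the simple characters in the subgeneric block, and excluding, say, $L_\K(\al\downarrow^2\lam+\tau)$ or $L_\K(\al\downarrow\lam-k\delta+\tau)$ is exactly where the work of \cite{2} lies (base change over localizations of $S$, the deformed Feigin--Frenkel/Frenkel--Gaitsgory description of the centre, etc.). Note also that the character formula you quote for $\rdel_\K(\lam+\tau)$ and the identification $\del^-_\K(\lam+\tau)=\sum_{n>0}\mathcal{Z}_{-n}\del_\K(\lam+\tau)$ over $\K$ are themselves imported results (the Remark in the paper states the latter only for the undeformed module over $\C$), and Theorem \ref{kk} gives Jordan--H\"older multiplicities, not the existence of the embedding $\del_\K(\al\downarrow\lam+\tau)\hookrightarrow\del_\K(\lam+\tau)$, which requires the standard singular-vector results for Verma modules over Kac--Moody algebras. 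In short, the proposal is an outline that reduces the theorem to the main technical results of \cite{2} without proving them.

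There is moreover a concretely false step. You claim $\al\downarrow\lam\not\leq\lam-m\delta$ for $m\geq1$. In the branch $\al\downarrow\lam=s_{-\al+\delta}\cdot\lam$ one has, at the critical level, $\lam-\al\downarrow\lam=k(\delta-\al)$ with $k=-\langle\lam+\rho,\al^\vee\rangle>0$, hence $\lam-m\delta-(\al\downarrow\lam)=(k-m)\delta-k\al$, and this \emph{can} lie in $\N\widehat{\Pi}$: take for instance $\g$ of type $B_2$ or $G_2$, $\al$ a simple root whose coefficient in $\theta$ is at least $2$, and $k\geq2m$; then $(k-m)\delta-k\al\geq0$, i.e.\ $\al\downarrow\lam\leq\lam-m\delta$. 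What you actually need, and what is true, is the stronger statement $\al\downarrow\lam\not\preceq_\K\lam-m\delta$, so that Theorem \ref{kk} excludes $L_\K(\al\downarrow\lam+\tau)$ from $\del_\K(\lam-m\delta+\tau)$: inside the subgeneric class the generating relations of $\preceq_\K$ below a given weight are either $\mu\mapsto\mu-k\delta$ or real steps changing the weight by $\pm|n|\al-|n|j\delta$ (with $j\geq0$ resp.\ $j\geq1$, $n=\langle\lam+\rho,\al^\vee\rangle$), and tracking the coefficient of $\delta$ shows that any chain starting at $\lam-m\delta$, $m\geq1$, which restores the finite part of $\al\downarrow\lam$ overshoots in the $\delta$-direction, so it never reaches $\al\downarrow\lam$. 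This repairs the step you use to place the singular vector outside $\del^-_\K(\lam+\tau)$, but as written that step is wrong, and the central multiplicity statements remain unproven.
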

Note that the term subgeneric implies $\lam \neq \al \downarrow \lam$.

\section{The restricted Jantzen sum formula}\label{sec2}

In \cite{1}, chapter 6, the authors establish a Jantzen sum formula for baby Verma modules. It relates the sum of the characters of the Jantzen filtration to an alternating sum of characters of baby Verma modules with smaller highest weights. We deduce a similar formula for the restricted Verma modules at the critical level. We state the main result in 

\begin{thm}\label{Ja1}
Let $\lam \in \hc$. There is a filtration
$$\rdel(\lam)= \rdel(\lam)^0 \supset \rdel(\lam)^1 \supset \rdel(\lam)^2 \supset ...$$
with the properties
\begin{enumerate}
\item $\rdel(\lam)^1$ is the maximal submodule of $\rdel(\lam)$
\item $$\sum_{\substack{i>0}} \mathrm{ch} \rdel(\lam)^i =  \sum_{\substack{\alpha \in R(\lam)^{+}}} \left(\sum_{\substack{i>0}} \left(\mathrm{ch} \rdel(\alpha \downarrow^{2i-1}\lam)- \mathrm{ch} \rdel(\alpha \downarrow^{2i}\lam)\right)\right)$$
\end{enumerate}
\end{thm}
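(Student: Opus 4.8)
The plan is to build the Jantzen filtration on $\rdel(\lam)$ by the standard deformation-theoretic device of Jantzen, carried out over a well-chosen discrete valuation ring, and then to reduce the computation of the alternating character sum to the subgeneric situation already understood via Theorem~\ref{thmpa}. First I would fix a deformation algebra $A$ which is a localization of $S$ at a height-one prime (equivalently, a DVR with residue field $\C$), chosen so that the canonical weight $\tau$ is ``generic'' in the direction we wish to deform; concretely $\tau$ should be a line through $0$ in $\h^*$ in general position with respect to the finite root hyperplanes, so that $R_A(\lam)=R(\lam)$ and the bilinear form $(\lam+\rho+\tau\,|\,\al)_A$ lies in $\mathfrak{m}_A$ precisely when $\al\in R(\lam)$. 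One then equips $\rdel_A(\lam+\tau)$ with the contravariant (Shapovalov-type) bilinear form, which is unique up to scalar on each weight space and which specializes, modulo $\mathfrak{m}_A$, to the contravariant form on $\rdel(\lam)$; its radical is the maximal submodule, giving property~(1). Setting $\rdel(\lam)^i:=\{v\in\rdel_A(\lam+\tau):(v,\cdot)\in\mathfrak{m}_A^i\}$ reduced mod $\mathfrak{m}_A$ defines the filtration, and the classical Jantzen argument (elementary divisors of the form matrix on each finite-dimensional weight space) yields the identity
\begin{equation*}
\sum_{i>0}\ch\rdel(\lam)^i \;=\; \sum_{i>0}\,\nu_{\mathfrak{m}_A}\!\big(\det(\text{form on }\rdel_A(\lam+\tau)_{\mu})\big)\,e^{\mu},
\end{equation*}
so that everything is reduced to computing the $\mathfrak{m}_A$-adic valuation of these Shapovalov determinants on the restricted Verma module.

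\textbf{Reduction to rank one.}
The key step is then to evaluate that determinant. The determinant of the contravariant form on the \emph{ordinary} deformed Verma module $\del_A(\lam+\tau)_\mu$ is given (up to a unit) by a Shapovalov-type product over positive roots $\beta\in\wR^+$ and positive integers $n$ of factors $(\lam+\rho+\tau\,|\,\beta)_A-\tfrac{n}{2}(\beta|\beta)_A$, raised to the partition-function multiplicity $\PP(\mu-\lam+n\beta)$ — or, more precisely, the multiplicity counting $\del(\lam)$-subquotients. Passing to the \emph{restricted} Verma module kills all contributions coming from imaginary roots $n\delta$ and, correctly bookkept, replaces $\del$-multiplicities by $\rdel$-multiplicities. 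By the genericity choice of $\tau$, a factor $(\lam+\rho+\tau\,|\,\beta)_A-\tfrac{n}{2}(\beta|\beta)_A$ has positive valuation only for the real roots $\beta\in\wR(\lam)^{\mathrm{re}}$, and for each such $\beta=\pm\al+m\delta$ the relevant reflection is $s_\al$ or $s_{-\al+\delta}$; grouping these factors by the finite root $\al\in R(\lam)^+$ and summing the geometric-type series in $n$ is exactly the manipulation performed in \cite{7}, chapters~5.6--5.7. The outcome should be, after reorganizing the sum, the $\al$-by-$\al$ telescoping expression
\begin{equation*}
\sum_{i>0}\ch\rdel(\lam)^i \;=\; \sum_{\al\in R(\lam)^+}\sum_{i>0}\Big(\ch\rdel(\al\downarrow^{2i-1}\lam)-\ch\rdel(\al\downarrow^{2i}\lam)\Big),
\end{equation*}
where the alternating pattern and the ``$\downarrow$'' bookkeeping encode that each application of $s_\al$ (or $s_{-\al+\delta}$) contributes with alternating sign along the chain $\lam\succ\al\downarrow\lam\succ\al\downarrow^2\lam\succ\cdots$.

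\textbf{Proving the rank-one input.}
To make the grouping legitimate I would first establish the formula for a \emph{single} $\al$, i.e.\ in the subgeneric case where $R_A(\lam)=\{\pm\al\}$: here Theorem~\ref{thmpa}(2) gives the two-step socle structure $L_\K(\al\downarrow\lam)\hookrightarrow\rdel_\K(\lam)\twoheadrightarrow L_\K(\lam)$ after base change to the residue field $\K$ of the subgeneric localization, and iterating down the chain $\lam\succ\al\downarrow\lam\succ\cdots$ identifies $\rdel(\mu)$ for each link. The subgeneric Jantzen filtration then has length exactly governed by the valuation of the single relevant determinant factor, and one reads off
\begin{equation*}
\sum_{i>0}\ch\rdel(\lam)^i \;=\; \sum_{i>0}\Big(\ch\rdel(\al\downarrow^{2i-1}\lam)-\ch\rdel(\al\downarrow^{2i}\lam)\Big)
\end{equation*}
in that case, using that $\ch\rdel(\lam)=\sum_j(-1)^j\,(\text{something})$ collapses as in the modular-Lie-algebra computation of \cite{1}, chapter~6 and \cite{8}. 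The passage from the subgeneric formula to the general one is then the ``putting-together'' step: one observes that over the general DVR $A$ the form determinant factors as a product over the $\al\in R(\lam)^+$ of the subgeneric determinants (because the generic deformation $\tau$ separates the root hyperplanes), hence the valuations add, hence the character sums add. I expect the main obstacle to be precisely the control of the determinant of the contravariant form on the \emph{restricted} module $\rdel_A(\lam+\tau)$ — unlike the ordinary Verma case there is no ready-made Shapovalov product formula, so one must either derive it from the short exact sequence $\del_A^-(\lam+\tau)\hookrightarrow\del_A(\lam+\tau)\twoheadrightarrow\rdel_A(\lam+\tau)$ together with additivity of Jantzen filtrations in short exact sequences, or argue directly that the imaginary-root factors contribute trivially after restriction; keeping track of multiplicities (partition function vs.\ Jordan--H\"older numbers) and of the $\al$ versus $-\al+\delta$ dichotomy in the definition of $\downarrow$ without double-counting is the delicate bookkeeping that the proof must handle carefully.
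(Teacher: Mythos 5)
Your overall architecture matches the paper's: deform, define the filtration by valuations of the contravariant form, use Jantzen's lemma (Lemma \ref{lemj}) to equate $\sum_{i>0}\dim_\C\rdel(\lam)^i_\mu$ with the order of a determinant, settle the subgeneric case via Theorem \ref{thmpa}, and assemble the general case following \cite{7}, chapters 5.6--5.7. The genuine gap is the point you yourself flag as ``the main obstacle'': you never obtain the valuations of the determinant of the contravariant form on the \emph{restricted} deformed module, and both halves of your argument need them. In the subgeneric case, saying the filtration ``has length exactly governed by the valuation of the single relevant determinant factor'' is circular -- there is no restricted Shapovalov product formula from which to read that valuation, and what must actually be proved is that $\rdel(\lam)^2=0$. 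In the general case, your claim that the restricted determinant ``factors as a product over $\al\in R(\lam)^+$ of the subgeneric determinants'' is exactly the unproved statement, so the putting-together step has no input. (Minor but real: a localization of $S$ at a height-one prime has as residue field the function field of the corresponding hyperplane, not $\C$, and the genericity needed is not $R_A(\lam)=R(\lam)$ but that the deformed restricted Verma module become \emph{simple} over the fraction field; in the paper this is Lemma \ref{lem1}, for the specific direction $\orho$, which must lie in $\h^*$ so that the restricted deformation exists at all.)

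The paper closes this gap without ever computing a restricted Shapovalov determinant. In the subgeneric case only the weight $\al\downarrow\lam$ matters: since $\al\downarrow\lam$ is not a weight of $\del(\lam)^-$, one has $\overline{D}_{\lam+t\orho}(\al\downarrow\lam+t\orho)=D_{\lam+t\orho}(\al\downarrow\lam+t\orho)$, and comparing the $t\orho$- with the $t\rho$-deformation through the Kac--Kazhdan formula (Theorem \ref{thmkk}) shows the two $t$-orders agree, because the factors with $\orho(h_\beta)=0$ are exactly the imaginary ones and these occur with exponent $\PP(\lam-\al\downarrow\lam-n\beta)=0$. Hence $\sum_{i>0}\dim_\C\rdel(\lam)^i_{\al\downarrow\lam}=\sum_{i>0}\dim_\C\del(\lam)^i_{\al\downarrow\lam}$; since $\ch L(\al\downarrow\lam)$ occurs only once in the non-restricted Jantzen sum, this forces $\rdel(\lam)^2=0$, and the alternating identity then follows by iterating the short exact sequence of Theorem \ref{thmpa}, not from any determinant computation. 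For general $\lam$ the paper works over $S$ with $\overline{D}_\epsilon(\epsilon-\nu)$ treated as an \emph{unknown} polynomial: generic simplicity (Theorem \ref{thmpa}(1)) forces every prime divisor to have the form $\al^\vee+\rho(\al^\vee)-r$ with $\al\in R^+$, $r\in\Z\setminus\{0\}$; the exponents packaged in $\nu_{\al,r}$ depend only on $\al$ and $r$, so they are determined by specializing at an auxiliary subgeneric weight $\mu$ with $\langle\mu+\rho,\al^\vee\rangle=\langle\lam+\rho,\al^\vee\rangle$ and invoking Proposition \ref{prop1}, after which equation (\ref{eqn1}) converts everything into the sum formula at $\lam$. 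This indirect identification of the prime factorization -- rather than a direct product formula for the restricted determinant -- is the idea missing from your proposal.
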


Note that the sum is taken over all finite, positive, integral roots $\al \in R(\lam)^+$.

\subsection{The Shapovalov determinant}

Let $\sigma: \wg \rightarrow \wg$ be a Chevalley-involution and define $\widehat{\mathfrak{n}}_+ := \bigoplus_{\al > 0} \wg_\al$ and $\widehat{\mathfrak{n}}_- := \bigoplus_{\al <0} \wg_\al$ where $\wg_\al$ is the root space of $\wg$ corresponding to the root $\al \in \wR$. Then we have a decomposition $U(\wg)=U(\wh)\oplus(\widehat{\mathfrak{n}}_-U(\wg)+U(\wg)\widehat{\mathfrak{n}}_+)$ and we denote by $\beta: U(\wg) \rightarrow S(\wh)$ the projection to the first summand of this decomposition.\\
The Shapovalov form is now defined as the bilinear pairing $F :U(\wg) \times U(\wg) \rightarrow S(\wh)$ with $F(x,y)= \beta(\sigma(x),y)$. It is symmetric and contravariant, i.e., for $u,x,y \in U(\wg)$ we have $F(\sigma(u)x,y)=F(x,uy)$.
For $\eta \in \N \wR^+$ we denote by $F_\eta$ the restriction of $F$ to the weight space $U(\widehat{\mathfrak{n}}_-)_{-\eta}$. Recall the isomorphism $\nu:\wh\stackrel{\sim}{\rightarrow} \wh^*$ induced by the bilinear form $(\cdot|\cdot)$ on $\wh$ and define $h_\al := \nu^{-1}(\al)$ for any root $\al \in \wR$.

\begin{thm}[\cite{6}, Theorem 1]\label{thmkk}
The determinant of 
$$F_\eta: U(\widehat{\mathfrak{n}}_-)_{-\eta} \times U(\widehat{\mathfrak{n}}_-)_{-\eta} \rightarrow S(\wh)$$ 
is, up to multiplication with a non-zero complex number, given by the formula
$$\mathrm{det} F_\eta = \prod_{\substack{\alpha \in \wR ^{+}}} \prod_{\substack{n=1}}^{\infty} \left( h_\al + \rho(h_\al) - n \frac{(\al|\al)}{2}\right) ^{\mathrm{mult}(\al) \cdot \mathcal{P}(\eta- n \alpha)}$$
where $\mathcal{P}$ is Kostant's partition function and $\mathrm{mult}(\al):= \mathrm{dim}_\C(\wg_\al)$.
\end{thm}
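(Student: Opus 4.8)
The plan is to prove the formula by the classical two-sided degree estimate of Kac--Kazhdan, treating $\det F_\eta$ as an element of the polynomial ring $S(\wh)$, i.e. as a polynomial function on $\wh^*$. For $\al\in\wR^+$ and $n\ge1$ put $\phi_{\al,n}:=h_\al+\rho(h_\al)-n\frac{(\al|\al)}{2}\in S(\wh)$, a homogeneous element of degree one, and set $D_\eta:=\prod_{\al\in\wR^+}\prod_{n\ge1}\phi_{\al,n}^{\mathrm{mult}(\al)\cdot\mathcal{P}(\eta-n\al)}$, which is a finite product because $\mathcal{P}(\eta-n\al)=0$ unless $n\al\le\eta$. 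I would prove (i) that $D_\eta$ divides $\det F_\eta$ in $S(\wh)$ and (ii) that $\deg\det F_\eta\le\deg D_\eta$. Since $D_\eta\neq0$ and $\det F_\eta\neq0$ in $S(\wh)$ (the form $F_\eta$ is non-degenerate at a generic weight), (i) and (ii) together force $\det F_\eta=c\cdot D_\eta$ for some $c\in\C^\times$, which is the assertion.

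For the degree bound (ii), I would fix a PBW basis $\{u_\pi\}$ of $U(\widehat{\mathfrak{n}}_-)_{-\eta}$ indexed by coloured partitions $\pi$ of $\eta$ --- choices, for each $\al\in\wR^+$, of a multiset of $\mathrm{mult}(\al)$-coloured copies of $\al$ of total weight $\eta$ --- and let $|\pi|$ denote the number of negative root vectors occurring in $u_\pi$. A standard triangulation argument (ordering the $\pi$ by number of parts and refining) shows that the top-degree homogeneous component of $\det F_\eta$ is the product over $\pi$ of the top-degree components of the diagonal Gram entries $F_\eta(u_\pi,u_\pi)$, and a reduction to the rank-one subalgebras attached to the roots occurring in $u_\pi$ identifies each $F_\eta(u_\pi,u_\pi)$, up to a non-zero scalar, with a product of $|\pi|$ linear forms. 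Hence $\deg\det F_\eta=\sum_{\pi\vdash\eta}|\pi|$. Applying $\frac{d}{ds}\big|_{s=1}$ to the generating-function identity $\prod_{\al\in\wR^+}(1-se^{-\al})^{-\mathrm{mult}(\al)}=\sum_\eta\big(\sum_{\pi\vdash\eta}s^{|\pi|}\big)e^{-\eta}$ and comparing coefficients of $e^{-\eta}$ yields $\sum_{\pi\vdash\eta}|\pi|=\sum_{\al\in\wR^+}\sum_{n\ge1}\mathrm{mult}(\al)\,\mathcal{P}(\eta-n\al)=\deg D_\eta$.

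For the divisibility (i) I would use the standard lemma that if a square matrix of polynomials has rank dropping by at least $r$ along the vanishing locus of an irreducible polynomial $\phi$, then $\phi^{r}$ divides its determinant; it thus suffices, for each irreducible factor $\phi$ of $D_\eta$ and generic $\lam$ with $\phi(\lam)=0$, to bound from below the corank of the contravariant form on $\del(\lam)_{\lam-\eta}$. One first observes that the $\phi_{\al,n}$ with $\al\in\wR^{\mathrm{re}}$, $n\ge1$, are pairwise distinct and distinct from every imaginary factor (the degree-one part $h_\al=\nu^{-1}(\al)$ already determines $\al$), whereas for a fixed imaginary root $k\delta$ all the $\phi_{k\delta,n}$ coincide --- since $(k\delta|k\delta)=0$ --- with the single linear form $k(c+h^\vee)$, which vanishes exactly on $\wh^*_{\mathrm{crit}}$. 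For a real root $\al$ and $n\ge1$: on the hyperplane $\langle\lam+\rho,\al^\vee\rangle=n$ the module $\del(\lam)$ has a singular vector of weight $s_\al\cdot\lam=\lam-n\al$ (the $\mathfrak{sl}_2$-theory of the real root $\al$), generating for generic such $\lam$ a submodule isomorphic to $\del(\lam-n\al)$ that lies in the radical of the form; since $\mathrm{mult}(\al)=1$ this yields $\phi_{\al,n}^{\mathcal{P}(\eta-n\al)}\mid\det F_\eta$. For an imaginary root $k\delta$ one must instead show that $c+h^\vee$ divides $\det F_\eta$ to order at least $\mathrm{mult}(k\delta)\sum_{n\ge1}\mathcal{P}(\eta-nk\delta)$ (with $\mathrm{mult}(k\delta)=\dim_{\C}\wg_{k\delta}=\mathrm{rank}\,\g$), i.e. that for $\lam$ of critical level the radical of the form on $\del(\lam)_{\lam-\eta}$ has at least this dimension; here the relevant submodules come from the $\delta$-shifted singular vectors of weights $\lam-mk\delta$, $m\ge1$, together with the $(\mathrm{rank}\,\g)$-fold degeneracy reflecting the algebraically independent homogeneous generators of the Feigin--Frenkel centre in each negative degree. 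I expect this imaginary-root estimate to be the main obstacle: the subtle point is not the existence of the $\delta$-shifted singular vectors but the extraction of the full multiplicity $\mathrm{rank}\,\g$ (rather than $1$) at the critical hyperplane, which one settles either by a direct construction of imaginary singular vectors as in Kac--Kazhdan or, in language closer to this paper, via the structure of the centre at the critical level. Once (i) and (ii) both hold, $\det F_\eta/D_\eta$ is a polynomial of degree zero, hence a non-zero scalar, and the formula follows.
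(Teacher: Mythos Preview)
The paper does not prove this theorem; it is quoted verbatim from Kac--Kazhdan (reference \cite{6}) and used as a black box in the subsequent computations of $D_{\lam+t\rho}(\mu+t\rho)$ and $D_{\lam+t\orho}(\mu+t\orho)$. So there is no ``paper's own proof'' to compare against, and for the purposes of this article a citation is all that is expected.

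That said, your sketch is exactly the Kac--Kazhdan strategy and the real-root and degree-bound parts are fine. One small inaccuracy: you write that for an imaginary root $k\delta$ one must show $(c+h^\vee)$ divides $\det F_\eta$ to order at least $\mathrm{mult}(k\delta)\sum_{n\ge1}\mathcal{P}(\eta-nk\delta)$, but since \emph{all} the imaginary factors $\phi_{k\delta,n}$ (for every $k\ge1$ and every $n\ge1$) collapse to the same linear form $k(c+h^\vee)$, what you actually need is the single bound
\[
\mathrm{ord}_{\,c+h^\vee}(\det F_\eta)\ \ge\ \sum_{k\ge1}\mathrm{mult}(k\delta)\sum_{n\ge1}\mathcal{P}(\eta-nk\delta)\,,
\]
i.e.\ the contributions from different $k$ cannot be handled separately and must be added up before invoking the rank-drop lemma. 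This aggregated corank estimate on the critical hyperplane is precisely the delicate part of \cite{6}; the Feigin--Frenkel description of the centre you allude to is one clean way to obtain it, though anachronistic relative to the original proof.
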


We equip the polynomial ring $\C[t]$ in one variable with two different structures of a deformation algebra. The first one is given by the map $\tau_1:\wS \twoheadrightarrow \C[t]$, where $\tau_1$ is induced by the inclusion of the line $\C\rho \subset \wh^*$. The second $\wS$-module structure $\tau_2:\wS \twoheadrightarrow \C[t]$ is given by the inclusion $\C\orho \subset \wh^*$. Recall that $\orho \in \h^*$ which implies that $\tau_2$ factors through the restriction map $\wS \twoheadrightarrow S$. For a more intuitional notation, we follow \cite{7} and define $\del_{\C[t]}(\lam +t \rho) := \del_{\C[t]}(\lam +\tau_1)$ and $\del_{\C[t]}(\lam +t\orho):=\del_{\C[t]}(\lam +\tau_2)$.\\
Note that for $\lam \in \wh^*_\mathrm{crit}$ critical and since $\tau_2(c)=\tau_2(d)=t\orho(c)=0$, we can construct the restricted Verma module $\rdel_{\C[t]}(\lam +t \orho)$. Let $\C(t)$ be the quotient field of $\C[t]$.

\begin{lemma}\label{lem1}
Let $\lam \in \wh^*_{\mathrm{crit}}$. Then $\rdel_{\C(t)}(\lam +t \orho)=\rdel_{\C[t]}(\lam +t \orho)\otimes_{\C[t]} \C(t)$ is simple.
\end{lemma}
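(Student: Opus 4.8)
The plan is to reduce the simplicity of $\rdel_{\C(t)}(\lam + t\orho)$ to Theorem~\ref{thmpa}(1) by showing that, over the field $\K = \C(t)$ with structure map $\tau_2$, the weight $\lam + t\orho$ is \emph{generic}, i.e.\ $\overline{\Lam} \subset \h^*$ is a singleton, where $\Lam$ is the $\sim_{\C(t)}$-equivalence class of $\lam$. Concretely, I would examine the defining relation of $\preceq_{\C(t)}$: a chain step from $\mu$ requires an $\al \in \wR^+$ and $n \in \N$ with $2(\mu + \rho + t\orho \,|\, \al)_{\C(t)} = n(\al|\al)_{\C(t)}$. I would split $\al$ into its real and imaginary parts. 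For imaginary $\al = k\delta$, the left side is $0$ (since $(\cdot|\delta)$ kills $\h^* \oplus \C\delta$ and $(t\orho|\delta)=0$ and $(\rho|\delta)=(\orho+h^\vee\Lam_\circ|\delta)=h^\vee \ne 0$ — wait, one must be careful: $(\rho|\delta) = h^\vee \ne 0$, but on $\wh^*_{\mathrm{crit}}$ we have $(\lam+\rho|\delta)=0$, so the left side is $0$, matching $n(k\delta|k\delta)=0$); these steps only shift by multiples of $\delta$ and never change the image in $\h^*$. The real key point is that for real $\al = \bar\al + k\delta$ with $\bar\al \in R$, the equation becomes $2(\lam+\rho|\bar\al) + 2t(\orho|\bar\al) + 2k(\lam+\rho|\delta) = n \kappa(\bar\al,\bar\al)$, and since $(\lam+\rho|\delta)=0$ this reads $2(\lam+\rho|\bar\al) + 2t(\orho|\bar\al) = n\kappa(\bar\al,\bar\al)$ as an equation in $\C(t)$. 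Because $(\orho|\bar\al) = \kappa(\orho,\bar\al) = (\bar\al|\bar\al)/2 \cdot \langle \orho, \bar\al^\vee\rangle \ne 0$ (the finite $\rho$ pairs nontrivially with every finite coroot), the left side is a genuinely non-constant polynomial in $t$ of degree one, while the right side is a constant; hence no such $n \in \N$ exists unless $\bar\al$ contributes trivially. I would conclude that the only chain steps are by multiples of $\delta$, so $\overline{\Lam} = \{\overline{\lam}\}$ is a single point, i.e.\ $\lam + t\orho$ is generic over $\C(t)$.

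Granting genericity, Theorem~\ref{thmpa}(1) applied with $\K = \C(t)$ and the deformation algebra $\tau_2$ (which indeed factors over $S$, as noted in the excerpt) gives immediately that $\rdel_{\C(t)}(\lam + t\orho)$ is simple. The remaining point is the identification $\rdel_{\C(t)}(\lam+t\orho) \cong \rdel_{\C[t]}(\lam+t\orho)\otimes_{\C[t]}\C(t)$: this is base change. Since $\C[t] \to \C(t)$ is a homomorphism of deformation algebras (both with structure map factoring over $S$), we have $\del_{\C[t]}(\lam+t\orho)\otimes_{\C[t]}\C(t) \cong \del_{\C(t)}(\lam+t\orho)$ by the base-change isomorphism for deformed Verma modules recalled in Section~3.1. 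Tensoring with $\C(t)$ is exact (localization), so it carries the submodule $\del^-_{\C[t]}(\lam+t\orho)$ — generated by the images of the maps $\del_{\C[t]}(\lam - n\delta + t\orho) \to \del_{\C[t]}(\lam+t\orho)$, $n > 0$ — onto the submodule generated by the corresponding maps over $\C(t)$, which is exactly $\del^-_{\C(t)}(\lam+t\orho)$; one checks the Hom-spaces behave correctly under the flat base change, using that the relevant maps exist over $\C[t]$ by Theorem~\ref{kk} since $(t\orho|\delta)=0$ and the level is critical. Passing to quotients yields the claimed isomorphism of restricted Verma modules.

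The main obstacle I anticipate is the genericity computation — specifically, verifying rigorously that no real root $\al$ can enter a $\preceq_{\C(t)}$-chain, which hinges on the observation that over $\C(t)$ the quantity $2(\lam+\rho+t\orho|\al)$ is a non-constant linear polynomial in $t$ (for $\al$ with nonzero finite part) whereas $n(\al|\al)$ is constant in $t$. One must double-check the edge case where $\kappa(\bar\al,\bar\al) = 0$; but $\bar\al$ is a root of the \emph{finite} simple Lie algebra $\g$, where the Killing form is positive on the real span of roots and in particular $\kappa(\bar\al,\bar\al) \ne 0$, so this case does not arise. A secondary technical point is making the base-change argument for $\del^-$ fully precise — i.e.\ that forming the submodule generated by a family of homomorphic images commutes with the flat base change $\C[t] \to \C(t)$ — but this is standard once one notes that $\C(t)$ is flat (indeed a localization) over $\C[t]$ and that $\del_{\C[t]}(\lam+t\orho)$ is generated over $U(\wg_{\C[t]})$ by a single weight vector, so the images in question are finitely generated submodules whose formation commutes with flat base change.
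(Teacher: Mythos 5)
Your proposal is correct and takes essentially the same route as the paper: the paper's proof simply observes that $(\orho|\al)\neq 0$ for all $\al \in R^+$ makes $2(\lam+\rho+t\orho|\al)_{\C(t)}$ a nonconstant linear polynomial in $t$, hence $R_{\C(t)}(\lam)=\emptyset$, and then invokes Theorem~\ref{thmpa}. Your explicit genericity analysis (including the imaginary-root steps) and the flat base-change identification are just careful write-ups of steps the paper leaves implicit.
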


\begin{proof}
If we proof that $R_{\C(t)}(\lam)=\emptyset$, the lemma follows from Theorem \ref{thmpa}. But since $(\orho|\al)\neq 0$ for all $\al \in R^+$ we get $2(\lam + \rho +t\orho|\al)_{\C(t)}\notin \Z(\al|\al)_{\C(t)}\subset \C(t)$ for all $\al \in R^+$.
\end{proof}

The Shapovalov form induces symmetric, contravariant bilinear forms on $\del_{\wS}(\lam + \epsilon')$ and $\rdel_S(\lam + \epsilon)$ where we denote by $\epsilon'\in \wh_{\wS}^*$ the canonical weight induced by $\wh \hookrightarrow \wS$ and by $\epsilon\in \wh_S^*$ its composition with $\wS \twoheadrightarrow S$. Moreover, it induces contravariant forms on all Verma modules $\del_{\C[t]}(\lam +t \rho)$, $\rdel_{\C[t]}(\lam +t \orho)$, $\del_{\C[t]}(\lam +t \orho)$, $\del(\lam)$ and $\rdel(\lam)$ we have to deal with in the rest of this paper. The contravariance of the forms implies for $\del(\lam)$ and $\rdel(\lam)$ that the radicals of the forms coincide with the maximal submodules of $\del(\lam)$ and $\rdel(\lam)$.

\subsection{Restricted Jantzen filtration}

Denote by $(\cdot,\cdot)$ the contravariant form on $\rdel_{\C[t]}(\lam + t\orho)$ induced by the Shapovalov form. We first define a filtration on $\rdel_{\C[t]}(\lam +t\orho)$ by 
$$\rdel_{\C[t]}(\lam+t\orho)^i:=\{m\in \rdel_{\C[t]}(\lam+t\orho)\,|\, ( m, \rdel_{\C[t]}(\lam+t\orho)) \subset t^i \C[t]\}$$
The \textit{Jantzen filtration} on $\rdel(\lam)$ is then defined by 
$$\rdel(\lam)^i:= \mathrm{im}(\rdel_{\C[t]}(\lam +t \orho)^i \hookrightarrow \rdel_{\C[t]}(\lam +t \orho) \twoheadrightarrow \rdel(\lam))$$
where the second map is specialization $t \mapsto 0$. In the same way we get the Jantzen filtration on $\del(\lam)$ as it is defined in \cite{6} using the deformed Verma module $\del_{\C[t]}(\lam +t \rho)$.

\begin{notation}
Let $\mu \leq \lam$. We denote the determinants of the contravariant bilinear forms on the $\mu$-weight spaces $\del_{\C[t]}(\lam+t \orho)_\mu$, $\del_{\C[t]}(\lam+t \rho)_\mu$ and $\rdel_{\C[t]}(\lam+t \orho)_\mu$ by $D_{\lam+t \orho}(\mu +t \orho)$, $D_{\lam+t \rho}(\mu +t \rho)$ and $\overline{D}_{\lam+t \orho}(\mu +t \orho)$.
\end{notation}

For a polynomial $P \in \C[t]$ denote by $\mathrm{ord}_t(P)$ the natural number $n\in \N$ with $t^n \mid P$ but $t^{n+1} \nmid P$. 

\begin{lemma}[\cite{7}, Lemma 5.1]\label{lemj}
For the Jantzen filtrations of the $\mu$-weight spaces of the non-restricted and restricted Verma modules we have the formulas
$$\sum_{\substack{i>0}} \mathrm{dim}_\C \Delta(\lam)^i_\mu=\mathrm{ord}_t (D_{\lam+t\rho}(\mu+t\rho))$$	
and
$$\sum_{\substack{i>0}} \mathrm{dim}_\C \rdel(\lam)^i_\mu=\mathrm{ord}_t (\overline{D}_{\lam+t\orho}(\mu+t\orho))$$	
\end{lemma}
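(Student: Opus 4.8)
The plan is to run the classical Jantzen computation for the deformed module $\rdel_{\C[t]}(\lam+t\orho)$ — and, in parallel, for $\del_{\C[t]}(\lam+t\rho)$; the two cases are formally identical, so I describe the restricted one. First I would fix a weight $\mu\le\lam$ and pass to the $\mu$-weight space. Since the Chevalley involution $\sigma$ fixes $\wh$ pointwise, the contravariant form pairs weight spaces of distinct weights trivially, so it restricts to a symmetric $\C[t]$-bilinear form $B_\mu$ on the finitely generated free $\C[t]$-module $M:=\rdel_{\C[t]}(\lam+t\orho)_\mu$, and for any $\C[t]$-basis its Gram determinant equals $\overline{D}_{\lam+t\orho}(\mu+t\orho)$ up to a nonzero scalar. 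The same orthogonality gives $\rdel_{\C[t]}(\lam+t\orho)^i_\mu=\{m\in M\mid B_\mu(m,M)\subseteq t^i\C[t]\}$, and since taking a weight space is exact, $\rdel(\lam)^i_\mu=\mathrm{im}\big(\rdel_{\C[t]}(\lam+t\orho)^i_\mu\to M/tM\big)$ with $M/tM=\rdel(\lam)_\mu$. Here I use that the weight spaces of $\rdel_{\C[t]}(\lam+t\orho)$ are free of finite rank over $\C[t]$; this is the one input that is genuinely about the module and not purely formal.

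Next I would record the non-degeneracy input, which ensures $\overline{D}_{\lam+t\orho}(\mu+t\orho)\ne 0$ so that $\mathrm{ord}_t$ is finite. For the restricted module this is exactly Lemma \ref{lem1}: $\rdel_{\C(t)}(\lam+t\orho)$ is simple, hence its maximal submodule — which by contravariance is the radical of the form — vanishes, so $B_\mu\otimes_{\C[t]}\C(t)$ is non-degenerate. (For the non-restricted side one argues the same way that $\del_{\C(t)}(\lam+t\rho)$ is simple: by Theorem \ref{kk} a proper submodule would force some $n\ge 1$ and $\al\in\wR^+$ with $2(\lam+\rho+t\rho|\al)_{\C(t)}=n(\al|\al)_{\C(t)}$, which is impossible — for $\al$ real the left side has $t$-degree $1$ because $\langle\rho,\al^\vee\rangle\ne 0$, and for $\al=m\delta$ the right side vanishes while the left side is $2mh^\vee t\ne 0$ using $(\lam+\rho|\delta)=0$ and $(\rho|\delta)=h^\vee$.)

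The heart of the proof is then an elementary-divisor computation. Over the discrete valuation ring $R=\C[t]_{(t)}$ — localizing at $(t)$ is harmless, since $-\otimes_{\C[t]}\C$, $\mathrm{ord}_t$, and the defining condition of $M^i$ all commute with this flat base change — I would diagonalize $B_\mu$: as $2$ is a unit and $B_\mu$ is non-degenerate over $\C(t)$, there is an $R$-basis $e_1,\dots,e_N$ of $M\otimes_{\C[t]}R$ whose Gram matrix is $\mathrm{diag}(t^{a_1},\dots,t^{a_N})$ with $a_j\in\N$ (Smith normal form for symmetric forms over a DVR). Then $\mathrm{ord}_t\big(\overline{D}_{\lam+t\orho}(\mu+t\orho)\big)=\sum_j a_j$, while $M^i=\bigoplus_j t^{\max(i-a_j,0)}Re_j$ has image $\bigoplus_{j:\,a_j\ge i}\C\,\overline{e_j}$ in $M/tM$, so $\dim_\C\rdel(\lam)^i_\mu=\#\{j:a_j\ge i\}$; summing over $i\ge 1$,
$$\sum_{i>0}\dim_\C\rdel(\lam)^i_\mu=\sum_{i\ge 1}\#\{j:a_j\ge i\}=\sum_j a_j=\mathrm{ord}_t\big(\overline{D}_{\lam+t\orho}(\mu+t\orho)\big),$$
and the non-restricted formula follows verbatim with $\del_{\C[t]}(\lam+t\rho)$ in place of $\rdel_{\C[t]}(\lam+t\orho)$. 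I expect the only real obstacle to be the freeness of $\rdel_{\C[t]}(\lam+t\orho)$ over $\C[t]$ with character independent of the specialization — equivalently, the vanishing of the torsion in its weight spaces, which follows once $\dim_{\C(t)}\rdel_{\C(t)}(\lam+t\orho)_\mu=\dim_\C\rdel(\lam)_\mu$ is known; granting this, the rest is the standard valuation-ring bookkeeping, exactly as in \cite{7}.
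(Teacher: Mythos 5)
Your argument is correct and is essentially the paper's own route: the paper simply cites Jantzen's Lemma 5.1 of \cite{7}, whose proof is exactly the elementary-divisor/valuation-ring computation you carry out (orthogonality of distinct weight spaces, non-degeneracy over $\C(t)$ via simplicity of the generic deformed module as in Lemma \ref{lem1}, then diagonalization over $\C[t]_{(t)}$ and the counting $\sum_{i\ge 1}\#\{j: a_j\ge i\}=\sum_j a_j$). The one input you flag, freeness of the weight spaces of $\rdel_{\C[t]}(\lam+t\orho)$ with rank independent of specialization, is indeed the only non-formal ingredient; it is implicitly assumed by the paper in defining $\overline{D}_{\lam+t\orho}(\mu+t\orho)$ and is supplied by the results of \cite{2} on restricted deformed Verma modules, so no genuine gap remains.
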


We first want to describe the Jantzen filtration of a restricted Verma module with a highest weight, which is critical and subgeneric.

\begin{prop}\label{prop1}
Let $\lam \in \hc$ be subgeneric, i.e., $R(\lam)=\{\pm \al\}$ for a finite positive root $\al \in R^+$ and $\al\downarrow\lam \neq \lam$. Then the Jantzen filtration of $\rdel(\lam)$ is
$$\rdel(\lam)\supset L(\alpha \downarrow \lam) \supset 0$$
and we have the alternating sum formula
$$\sum_{\substack{i>0}} \mathrm{ch} \rdel(\lam)^i = \mathrm{ch} L(\alpha \downarrow \lam)= \mathrm{ch}\rdel(\alpha\downarrow \lam)-  \mathrm{ch} \rdel(\alpha\downarrow^2 \lam) +\mathrm{ch}\rdel(\alpha\downarrow^3 \lam)-...$$
\end{prop}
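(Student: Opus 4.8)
The plan is to compute the order of vanishing of the restricted Shapovalov determinant $\overline{D}_{\lam+t\orho}(\mu+t\orho)$ at $t=0$ for each $\mu\leq\lam$, and then read off the sum of the characters of the Jantzen layers from Lemma~\ref{lemj}. First I would set up the relation between the restricted determinant and the ordinary one. Since $\rdel_{\C[t]}(\lam+t\orho)$ is a quotient of $\del_{\C[t]}(\lam+t\orho)$ by the submodule generated by the images of $\del_{\C[t]}(\lam-n\delta+t\orho)\to\del_{\C[t]}(\lam+t\orho)$, and these maps become isomorphisms onto their images after inverting suitable elements (the critical-level condition forces $2(\lam+\rho+t\orho\mid n\delta)_{\C[t]}=0=n(\delta\mid\delta)$ for all $n$), one gets a factorization of $D_{\lam+t\orho}(\mu+t\orho)$ into the restricted determinant $\overline{D}_{\lam+t\orho}(\mu+t\orho)$ times contributions coming from the imaginary-root directions. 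Concretely I would use Theorem~\ref{thmkk}: in the deformation $\tau_2$ sending $\rho\mapsto\orho$ (so $t$ enters only through the finite part), the factor $h_\al+\rho(h_\al)-n\frac{(\al\mid\al)}{2}$ specializes, for a real root $\al+m\delta$, to an affine-linear polynomial in $t$ whose $t$-order is $1$ exactly when $\lam$ lies on the corresponding reflection hyperplane and $0$ otherwise; for imaginary roots $m\delta$ the factor is independent of $t$ and is precisely what gets divided out in passing to the restricted module.

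Next, because $\lam$ is subgeneric with $R(\lam)=\{\pm\al\}$, the only real roots $\beta\in\wR^+$ for which $2(\lam+\rho+t\orho\mid\beta)_{\C[t]}$ is divisible by $t$ are $\beta=\al$ and $\beta=-\al+\delta$ (and their obvious companions governed by $\al\downarrow$), and among the two the one actually contributing to the radical is the one with $\al\downarrow\lam<\lam$. So $\operatorname{ord}_t\overline{D}_{\lam+t\orho}(\mu+t\orho)$ counts, with the multiplicity $\PP$-coefficients from Theorem~\ref{thmkk}, exactly the reflection hyperplane through $\al\downarrow\lam$. I would then compare this count with the character of $\rdel(\al\downarrow\lam)$: by Lemma~\ref{lem1} the generic deformation $\rdel_{\C(t)}(\al\downarrow\lam+t\orho)$ is simple, hence $\operatorname{ch}\rdel(\al\downarrow\lam)$ equals the ``naive'' character predicted by the partition function, and iterating (since $\al\downarrow(\al\downarrow\lam)$ is again subgeneric for the same root, or stabilizes) yields the telescoping identity
$$\sum_{i>0}\operatorname{ch}\rdel(\lam)^i_\mu=\operatorname{ch}\rdel(\al\downarrow\lam)_\mu-\operatorname{ch}\rdel(\al\downarrow^2\lam)_\mu+\operatorname{ch}\rdel(\al\downarrow^3\lam)_\mu-\cdots$$
summed over $\mu$. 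By Theorem~\ref{thmpa}(2) the right-hand side collapses to $\operatorname{ch}L(\al\downarrow\lam)$: indeed the alternating sum of the restricted-Verma characters along the $\al\downarrow$-chain is a standard resolution-type identity once one knows each $\rdel(\al\downarrow^k\lam)$ has radical $L(\al\downarrow^{k+1}\lam)$.

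For part~(1) of the statement, that the Jantzen filtration of $\rdel(\lam)$ is exactly $\rdel(\lam)\supset L(\al\downarrow\lam)\supset 0$: the contravariance of the form gives that $\rdel(\lam)^1$ is the radical of the form, which by Theorem~\ref{thmpa}(2) is $L(\al\downarrow\lam)$; and the character computation above forces $\sum_{i>0}\operatorname{ch}\rdel(\lam)^i=\operatorname{ch}L(\al\downarrow\lam)$, so there is no room for a nonzero $\rdel(\lam)^2$. The main obstacle I anticipate is bookkeeping the precise factorization $D_{\lam+t\orho}=\overline{D}_{\lam+t\orho}\cdot(\text{imaginary-root part})$ and checking that the imaginary part contributes $t$-order zero, so that $\operatorname{ord}_t\overline{D}=\operatorname{ord}_t D$ on each weight space; this requires care with the deformed bilinear form $(\cdot\mid\cdot)_{\C[t]}$ on $\wh^*_{\C[t]}$ and with the fact that $\tau_2$ kills $c$ and $d$. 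Once that is in place, everything else is the subgeneric input of Arakawa–Fiebig plus a telescoping sum.
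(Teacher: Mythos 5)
There is a genuine gap at the heart of your plan: the claimed factorization $D_{\lam+t\orho}(\mu+t\orho)=\overline{D}_{\lam+t\orho}(\mu+t\orho)\cdot(\text{imaginary-root part})$, and the resulting assertion that $\mathrm{ord}_t\overline{D}=\mathrm{ord}_t D$ on \emph{each} weight space, do not hold. Under the deformation $\tau_2$ (i.e.\ $\lam+t\orho$) and at the critical level, the imaginary-root factors of the Kac--Kazhdan determinant are
$(\lam+t\orho+\rho\,|\,m\delta)=m(\lam+\rho\,|\,\delta)+tm(\orho\,|\,\delta)=0$
identically in $t$, not merely ``independent of $t$''. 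Hence $D_{\lam+t\orho}(\mu+t\orho)$ is the zero polynomial on every weight space $\mu$ for which some exponent $\PP(\lam-\mu-nm\delta)$ is nonzero, i.e.\ on every weight space meeting $\del(\lam)^-$; the relation you want to exploit reads $0=\overline{D}\cdot 0$ and gives no information about $\mathrm{ord}_t\overline{D}$. ``Dividing out'' the imaginary contributions is exactly the hard part: a formula for the restricted Shapovalov determinant on all weight spaces is a nontrivial result in its own right (this is the restricted determinant problem of Kumar--Letzter type), and your appeal to the injections $\del_{\C[t]}(\lam-n\delta+t\orho)\to\del_{\C[t]}(\lam+t\orho)$ does not produce it: one would need, e.g., a basis of $\del_{\C[t]}(\lam+t\orho)_\mu$ adapted to $\del^-_{\C[t]}$ in which the Gram matrix becomes block-triangular with an identifiable ``imaginary'' block, and no such statement is established. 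A secondary inaccuracy: in the subgeneric case the $t$-divisible real-root factors are not only those for $\beta=\al$ and $\beta=-\al+\delta$, but for the whole family $\pm\al+m\delta\in\wR^+$ (one value of $n$ for each such $\beta$), weighted by the partition-function exponents; this is precisely why the eventual sum formula involves the entire string $\al\downarrow^k\lam$.

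The published argument avoids the global determinant comparison altogether, and you need some substitute for it. Since $\rdel(\lam)^1$ coincides with the radical of the contravariant form and hence with the simple maximal submodule $L(\al\downarrow\lam)$ (this part of your argument is fine), the only issue is to rule out $\rdel(\lam)^2=L(\al\downarrow\lam)$, and for that it suffices to control $\sum_{i>0}\dim_\C\rdel(\lam)^i_{\mu}$ at the \emph{single} weight $\mu=\al\downarrow\lam$. At that weight $\del(\lam)^-$ has zero weight space, so $\overline{D}_{\lam+t\orho}(\al\downarrow\lam+t\orho)=D_{\lam+t\orho}(\al\downarrow\lam+t\orho)$ holds trivially, and all imaginary exponents $\PP(\lam-\al\downarrow\lam-nm\delta)$ vanish, so the $t$-order agrees with that of $D_{\lam+t\rho}(\al\downarrow\lam+t\rho)$; Lemma~\ref{lemj} and the known (non-restricted) Jantzen filtration then show this order is $1$, forcing $\rdel(\lam)^2=0$. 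Your final telescoping step, $\ch L(\al\downarrow\lam)=\sum_{k\geq 1}(-1)^{k+1}\ch\rdel(\al\downarrow^k\lam)$ via repeated use of Theorem~\ref{thmpa}(2), is correct and is the same as in the paper, but Lemma~\ref{lem1} by itself does not give you the ``naive'' character of $\rdel(\al\downarrow\lam)$ over $\C$, nor is that needed once the telescoping identity is used.
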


\begin{proof}
From Theorem \ref{thmpa} we see that $L(\al \downarrow \lam)$ is the maximal submodule of $\rdel(\lam)$. By the definition of the Jantzen filtration we conclude that $\rdel(\lam)^1$ coincides with the radical of the contravariant form on $\rdel(\lam)$ induced by the Shapovalov form. Thus, $\rdel(\lam)^1\cong L(\al\downarrow \lam)$. Thus, we are left to proof $\rdel(\lam)^2=\{0\}$.\\
For $\beta \in \wR^+$ and $n \in \N$ set $\mu_{\beta,n}:=\lam-\al\downarrow \lam- n \beta$. Then, by Theorem \ref{thmkk}, the polynomial $D_{\lam+t\orho}(\al \downarrow \lam +t \orho)\in \C[t]$ is, up to multiplication with a non-zero complex number, given by the product
$$\prod_{\substack{\beta \in \wR ^{+}}} \prod_{\substack{n=1}}^{\infty} \left( (\al\downarrow\lam)(h_\beta) + t\orho(h_\beta)+ \rho(h_\beta) - n\frac{(\beta|\beta)}{2} \right) ^{\mathrm{mult}(\beta) \cdot \mathcal{P}(\mu_{\beta,n})}$$
and similarliy $D_{\lam+t\rho}(\al \downarrow \lam +t \rho) \in \C[t]$ is given by
$$\prod_{\substack{\beta \in \wR ^{+}}} \prod_{\substack{n=1}}^{\infty} \left( (\al\downarrow\lam)(h_\beta) + t\rho(h_\beta)+ \rho(h_\beta) - n\frac{(\beta|\beta)}{2} \right) ^{\mathrm{mult}(\beta) \cdot \mathcal{P}(\mu_{\beta,n})}$$
Note that $\orho(h_\beta)=0$ for $\beta \in \wR^+$ if and only if $\beta=m \delta$ for $m \in \N \backslash \{0\}$. But for $\beta=m \delta$ with $m \in \N\backslash \{0\}$ we have $\PP(\lam-\al\downarrow\lam-n\beta)=0$ and we conclude
$$\mathrm{ord}_t(D_{\lam+t\rho}(\al \downarrow \lam +t \rho))= \mathrm{ord}_t(D_{\lam+t\orho}(\al \downarrow \lam +t \orho)).$$
But since $\al\downarrow \lam$ does not appear as a weight in the submodule $\del(\lam)^- \subset \del(\lam)$, we have
$$\overline{D}_{\lam+t\orho}(\al \downarrow \lam +t \orho)=D_{\lam+t\orho}(\al \downarrow \lam +t \orho)$$
Thus, by Lemma \ref{lemj}, we have
$$\sum_{\substack{i>0}}\mathrm{dim}_\C \rdel(\lam)^i_{\al \downarrow \lam}= \sum_{\substack{i>0}}\mathrm{dim}_\C \del(\lam)^i_{\al \downarrow \lam}$$
From the non-restricted Jantzen filtration of \cite{6}, one knows that the character of the simple module $L(\al\downarrow\lam)$ only appears once in the sum $\sum_{\substack{i>0}}\ch \del(\lam)^i$. Since $\al\downarrow\lam$ is a maximal weight of $\rdel(\lam)^1$ the character of $L(\al \downarrow \lam)$ can also only appear once in the sum $\sum_{\substack{i>0}}\mathrm{ch} \rdel(\lam)^i$ which implies $\rdel(\lam)^2 \neq L(\al \downarrow \lam)$ and thus $\rdel(\lam)^2=0$.\\
The second part of the proposition follows inductively from Theorem \ref{thmpa} since $\ch L(\al \downarrow \lam) = \ch \rdel(\al \downarrow \lam) -\ch L(\al \downarrow^2 \lam)$.
\end{proof}

\begin{remark}
One can find a slightly different proof of Proposition \ref{prop1} in \cite{9}, Lemma 4.9.
\end{remark}

Now, recall the canonical weight $\epsilon: \wh_S \rightarrow S$ and for $\mu \leq 0$ denote by $\overline{D}_{\epsilon}(\mu+\epsilon)$ the determinant of the contravariant form on $\rdel_S(\epsilon)_\mu$. Let $\phi:S\twoheadrightarrow \C[t]$ be the map induced by the restriction to $\lam+\C\orho \subset \wh^*$.\\
If $p \in S$ is a prime element and $a\in S$, we denote by $\mathrm{ord}_p(a)$ the integer $n \in \N$ such that $p^n \mid a$ but $p^{n+1} \nmid a$. By \cite{7}, chapter 5.6, we get for $a \in A$
\begin{equation}\label{eqn1}
\mathrm{ord}_t(\phi(a))=\sum_{\substack{p}}\mathrm{ord}_p(a) \mathrm{ord}_t(\phi(p))
\end{equation}

where $p$ runs over all classes of associated prime elements of $S$.\\
As in Lemma \ref{lem1} we see that, for $Q=Q(S)$ the quotient field of $S$, the restricted Verma module $\rdel_Q(\epsilon)\cong \rdel_S(\epsilon) \otimes_S Q$ is simple. We conclude that $\overline{D}_{\epsilon}(-\nu +\epsilon)\neq0$ for all $\nu \in \N \wR^+$ and $\phi(\overline{D}_{\epsilon}(-\nu +\epsilon))\neq0$ as well. Combining equation (\ref{eqn1}) with Lemma \ref{lemj}, we get
\begin{equation*}
\sum_{\substack{n>0}} \ch \rdel(\lam)^n = e^\lam \sum_{\substack{p}}\mathrm{ord}_t(\phi(p)) \sum_{\nu \in \N\wR^+}\mathrm{ord}_p(\overline{D}_{\epsilon}(-\nu +\epsilon))e^{-\nu}
\end{equation*}

We are now able to proof the general case. For any finite root $\al \in R$, $\langle \lam +\rho, \al^\vee\rangle-n$ differs from $h_\al +\rho(h_\al) -n\frac{(\al|\al)}{2}$ by multiplication with a non-zero complex number. For convenience, we work with the first polynomial in the following proof since we are only dealing with finite roots. We follow \cite{7}, chapter 5.7.
\begin{proof}[Proof of Theorem \ref{Ja1}]
If $\lam \in \hc$ and $\langle \lam +\rho, \al^\vee\rangle \notin \Z\backslash \{0\}$ for all $\al \in R^+$, then $\rdel(\lam)$ is simple, by Theorem \ref{thmpa}. Thus, the evaluation of the polynomial $\overline{D}_\epsilon(\epsilon -\nu) \in S$ at $\lam$ is unequal to zero. But this implies that all prime divisors of $\overline{D}_\epsilon(\epsilon -\nu)$ are of the form $\al^\vee+\rho(\al^\vee)-r$ where $\al \in R^+$ and $r\in \Z\backslash\{0\}$.\\
We define $\nu_{\al,r} \in \widehat{\Z[\wh^*]}$ by
$$\nu_{\al,r}= \sum_{\substack{\eta \in \N\wR^+}} \mathrm{ord}_{\al^\vee + \rho(\al^\vee)-r}(\overline{D}_\epsilon(\epsilon-\eta))e^{-\eta}.$$

For any $\lam \in \wh^*$ we have
$$\phi(\al^\vee +\rho(\al^\vee)-r)=\langle \lam +\rho,\al^\vee\rangle -r+t\langle\orho,\al^\vee\rangle \neq 0$$

If $r \neq \langle \lam +\rho,\al^\vee\rangle$, then $\mathrm{ord}_t(\langle \lam +\rho,\al^\vee\rangle-r+t\langle\orho,\al^\vee\rangle)=0$, but for $r=\langle \lam +\rho,\al^\vee\rangle$ we have $\mathrm{ord}_t(\langle \lam +\rho,\al^\vee\rangle-r+t\langle\orho,\al^\vee\rangle)=1$. Thus, $\al^\vee+\rho(\al^\vee)-\langle \lam +\rho,\al^\vee\rangle$ can only be a prime divisor of $\overline{D}_\epsilon(\epsilon-\eta)$ if $\al \in R(\lam)^+$. But then we conclude
\begin{equation}\label{eqn2}
\sum_{i>0}\ch \rdel(\lam)^i=\sum_{\substack{\al \in R(\lam)^+}} \nu_{\al,\langle \lam +\rho,\al^\vee\rangle}e^\lam
\end{equation}
To calculate the character $\nu_{\al,\langle \lam +\rho,\al^\vee\rangle}$ we find $\mu \in \hc$, such that $\langle \mu+\rho,\al^\vee\rangle=n=\langle \lam +\rho,\al^\vee\rangle$ and $\langle \mu+\rho,\beta^\vee\rangle \notin \Z$ for all $\beta \in R(\lam)^+\backslash \{\al\}$. Then we have that $\mu$ is subgeneric, since $R(\mu)=\{\pm \al\}$. In this case we already calculated the Jantzen filtration in Proposition \ref{prop1}:
$$\rdel(\mu) \supset L(\al\downarrow \mu) \supset 0$$
We conclude
$$\nu_{\al,n}e^\mu=\ch L(\al\downarrow \mu)= \sum_{\substack{i>0}} \left(\mathrm{ch} \rdel(\alpha \downarrow^{2i-1}\mu)- \mathrm{ch} \rdel(\alpha \downarrow^{2i}\mu)\right)$$

and because of the choice of $\mu$ we get
$$\nu_{\al,n}e^\lam= \sum_{\substack{i>0}} \left(\mathrm{ch} \rdel(\alpha \downarrow^{2i-1}\lam)- \mathrm{ch} \rdel(\alpha \downarrow^{2i}\lam)\right)$$
Thus, using equation (\ref{eqn2}) we get the result of Theorem \ref{Ja1}.
\end{proof}

As a consequence of Theorem \ref{Ja1} we get the linkage principle for restricted Verma modules at the critical level in the same way as in \cite{1}, chapter 6, or \cite{8}, Theorem 10.3. The linkage principle was already proved in \cite{3} introducing restricted projective objects in the restricted category $\OC$ over the Lie algebra $\wg$. Our proof, however, avoids the rather complicated construction of restricted projective objects.

\begin{cor}[\cite{3}, Theorem 5.1]
Let $\lam \in\hc$ and $\mu \in \wh^*$. Then $[\rdel(\lam):L(\mu)] \neq 0$ implies $\mu \in \wW(\lam)\cdot \lam$ and $\mu \leq \lam$.
\end{cor}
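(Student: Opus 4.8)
The plan is to derive the linkage principle directly from the Jantzen sum formula of Theorem~\ref{Ja1}, by an induction on the partial order $\leq$ restricted to the (countably many) weights below $\lam$. The starting observation is that if $[\rdel(\lam):L(\mu)]\neq0$ with $\mu\neq\lam$, then $L(\mu)$ is a composition factor of the maximal submodule $\rdel(\lam)^1$, hence by part~(2) of Theorem~\ref{Ja1} the character $e^\mu$ must appear with nonzero coefficient in
$$\sum_{i>0}\ch\rdel(\lam)^i=\sum_{\al\in R(\lam)^+}\sum_{i>0}\bigl(\ch\rdel(\al\downarrow^{2i-1}\lam)-\ch\rdel(\al\downarrow^{2i}\lam)\bigr).$$
A priori cancellation in this alternating sum is possible, but one does not need to control it: it suffices that $\mu$ be $\leq$ some weight $\al\downarrow^{j}\lam$ with $\al\downarrow^{j}\lam\neq\lam$ appearing on the right-hand side, i.e.\ that $\mu$ is a weight of some $\rdel(\al\downarrow^{j}\lam)$ with $j\geq1$ and $\al\in R(\lam)^+$. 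Indeed, since $\rdel(\nu)$ is a quotient of $\del(\nu)$, its weights lie in $\nu-\N\wR^+$, so if $e^\mu$ occurs in $\ch\rdel(\al\downarrow^j\lam)$ then $\mu\leq\al\downarrow^j\lam$; and since only finitely many of the terms $\rdel(\al\downarrow^j\lam)$ can have $\mu$ as a weight (those with $\al\downarrow^j\lam\geq\mu$), non-vanishing of the coefficient of $e^\mu$ forces $e^\mu$ to occur in at least one of them. Because $[\rdel(\al\downarrow^j\lam):L(\mu)]\neq 0$ then follows from $e^\mu$ being a weight of a highest weight module only up to knowing $L(\mu)$ is actually a composition factor — one instead argues directly with characters: $\ch\rdel(\al\downarrow^j\lam)=\sum_\eta[\rdel(\al\downarrow^j\lam):L(\eta)]\ch L(\eta)$, so $e^\mu$ occurs in $\ch\rdel(\al\downarrow^j\lam)$ iff $[\rdel(\al\downarrow^j\lam):L(\eta)]\neq0$ for some $\eta\geq\mu$, and iterating this bookkeeping is exactly where the induction enters.

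Concretely, here is the induction. Fix $\lam\in\hc$ and suppose $[\rdel(\lam):L(\mu)]\neq0$; we prove $\mu\in\wW(\lam)\cdot\lam$ and $\mu\leq\lam$ by induction on the (finite) length of the longest chain $\mu<\cdots<\lam$ in $(\wh^*,\leq)$ lying in $\lam-\N\wR^+$ — or, equivalently, on $\mathrm{ht}(\lam-\mu)$ where $\mathrm{ht}$ counts positive roots with multiplicity. The inequality $\mu\leq\lam$ is immediate since $\rdel(\lam)$ is a highest weight module of highest weight $\lam$. If $\mu=\lam$ there is nothing to prove. If $\mu\neq\lam$, then $L(\mu)$ is a composition factor of $\rdel(\lam)^1$, so $e^\mu$ appears in $\sum_{i>0}\ch\rdel(\lam)^i$ with nonzero coefficient, and by the discussion above there is $\al\in R(\lam)^+$ and $j\geq1$ with $e^\mu$ a weight of $\rdel(\al\downarrow^j\lam)$, i.e.\ $[\rdel(\al\downarrow^j\lam):L(\eta)]\neq0$ for some $\eta\geq\mu$. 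Set $\lam':=\al\downarrow^j\lam$. One checks from the definition of $\al\downarrow$ (reflections $s_\al$ and $s_{-\al+\delta}$, both lying in $\wW(\lam)$ since $\al\in R(\lam)^+\subset\wR(\lam)^{\mathrm{re}}$, as $-\al+\delta\in\wR(\lam)$ too because $\delta\in\wR(\lam)$ always at the critical level) that $\lam'\in\wW(\lam)\cdot\lam$ and $\lam'<\lam$ (strictly, whenever $\al\downarrow^{j-1}\lam$ is not $\al$-fixed; if it becomes fixed the chain terminates and that term equals a previous one, contributing nothing new). Since $\lam'<\lam$ we have $\mathrm{ht}(\lam'-\eta)<\mathrm{ht}(\lam-\eta)\leq\mathrm{ht}(\lam-\mu)$ is not quite what we want, so instead: apply the inductive hypothesis to $\lam'$ in place of $\lam$. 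We must check $\mathrm{ht}(\lam'-\eta)<\mathrm{ht}(\lam-\mu)$; this holds because $\eta\geq\mu$ gives $\mathrm{ht}(\lam'-\eta)\leq\mathrm{ht}(\lam'-\mu)$ and $\lam'<\lam$ gives $\mathrm{ht}(\lam'-\mu)<\mathrm{ht}(\lam-\mu)$. Hence by induction $\eta\in\wW(\lam')\cdot\lam'$. Finally $\wW(\lam')=\wW(\lam)$ — because $R_\C(\lam')=R_\C(\lam)$, since $\lam'=w\cdot\lam$ for $w\in\wW(\lam)$ and the integral root system is $\wW(\lam)$-invariant under the dot action — and $\wW(\lam')\cdot\lam'=\wW(\lam)\cdot\lam$, so $\eta\in\wW(\lam)\cdot\lam$.

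It remains to get from $\eta\in\wW(\lam)\cdot\lam$ to $\mu\in\wW(\lam)\cdot\lam$. This is where a second, nested induction or a cleaner argument is needed: we know $e^\mu$ occurs in $\ch\rdel(\lam')$ but only deduced $[\rdel(\lam'):L(\eta)]\neq0$ for \emph{some} $\eta\geq\mu$. If $\eta=\mu$ we are done. If $\eta>\mu$, then $\mu$ is a composition-factor-highest-weight strictly below $\eta$ inside $\rdel(\lam')$... this does not directly bound things. The right fix is to phrase the whole induction as: \emph{for every $\lam\in\hc$ and every $\mu$ with $e^\mu$ occurring in $\ch\rdel(\lam)$, we have $\mu\in\wW(\lam)\cdot\lam$}. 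Since $\ch\rdel(\lam)=\sum_\eta[\rdel(\lam):L(\eta)]\ch L(\eta)$ and each $\ch L(\eta)$ has $e^\eta$ as leading term, ``$e^\mu$ occurs in $\ch\rdel(\lam)$'' is equivalent to ``$[\rdel(\lam):L(\eta)]\neq0$ for some $\eta\geq\mu$ with $e^\mu$ occurring in $\ch L(\eta)$''. When $\eta=\lam$: $L(\lam)$ is the head; when $\mu=\lam$ done; when $\mu<\lam$ it occurs in $\ch L(\lam)$ which occurs in $\ch\rdel(\lam)$ — but $\ch L(\lam)$'s non-leading weights are governed by... the maximal submodule again, i.e.\ $e^\mu$ ($\mu<\lam$) occurs in $\ch L(\lam)$ only if it occurs in $\ch\rdel(\lam)^1$, circling back to the sum formula. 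So the induction variable should simply be the pair $(\lam,\mu)$ ordered by $\mathrm{ht}(\lam-\mu)$, and the above paragraph's argument applies verbatim with $\mu$ in the role previously played by $\eta$ (we never actually needed $\eta\neq\mu$): from $e^\mu$ occurring in $\ch\rdel(\lam)$ with $\mu<\lam$ we get directly, via the sum formula, that $e^\mu$ occurs in some $\ch\rdel(\lam')$ with $\lam'=\al\downarrow^j\lam\in\wW(\lam)\cdot\lam$, $\lam'<\lam$, $\mathrm{ht}(\lam'-\mu)<\mathrm{ht}(\lam-\mu)$, whence by induction $\mu\in\wW(\lam')\cdot\lam'=\wW(\lam)\cdot\lam$. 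The main obstacle, and the only place any care is required, is the bookkeeping around cancellation in the alternating sum: one must argue that non-vanishing of the \emph{net} coefficient of $e^\mu$ guarantees $e^\mu$ genuinely appears in one of the summands $\ch\rdel(\al\downarrow^j\lam)$ with $j\geq1$ — which it does, since all these characters have non-negative coefficients and only finitely many have $\mu$ in their support (those with $\al\downarrow^j\lam\geq\mu$), so some summand must have strictly positive coefficient at $e^\mu$. Everything else is the routine verification that $\al\downarrow$ preserves the $\wW(\lam)$-dot-orbit and strictly decreases $\leq$ until it stabilizes.
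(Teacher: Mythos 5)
Your outline (induction via the sum formula, plus the observations that $\al\downarrow$ stays in the orbit $\wW(\lam)\cdot\lam$ and that $\wW(\al\downarrow^{j}\lam)\cdot(\al\downarrow^{j}\lam)=\wW(\lam)\cdot\lam$) is the right one, but the final reduction you settle on is wrong. You replace the hypothesis $[\rdel(\lam):L(\mu)]\neq 0$ by the weaker condition that $e^\mu$ occurs in $\ch\rdel(\lam)$, and propose to run the induction on the strengthened statement ``every $\mu$ with $e^\mu$ occurring in $\ch\rdel(\lam)$ lies in $\wW(\lam)\cdot\lam$''. That statement is false: for $\lam\in\hc$ generic, $\rdel(\lam)$ is simple by Theorem \ref{thmpa}, $R(\lam)=\emptyset$, so $\wW(\lam)\cdot\lam$ need not contain any weight below $\lam$, while $\ch\rdel(\lam)$ has plenty of weights $\mu<\lam$. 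Your bridging claim that $e^\mu$ with $\mu<\lam$ occurs in $\ch L(\lam)$ only if it occurs in $\ch\rdel(\lam)^1$ fails in the same example ($\rdel(\lam)^1=0$). The root of the trouble is the point you yourself flagged in your second paragraph: comparing coefficients of $e^\mu$ (weight multiplicities) in the sum formula only gives $[\rdel(\al\downarrow^{j}\lam):L(\eta)]\neq 0$ for some $\eta\geq\mu$, and with weights instead of composition factors the induction does not close.

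The missing idea --- and the way the paper argues --- is to compare multiplicities of simple modules rather than weight multiplicities. The characters $\ch L(\eta)$ are linearly independent (upper triangular with respect to $\leq$), and every term in Theorem \ref{Ja1}(2) is the character of an actual module, hence a nonnegative integral combination of the $\ch L(\eta)$. If $\mu<\lam$ and $[\rdel(\lam):L(\mu)]\neq 0$, then $[\rdel(\lam)^1:L(\mu)]\geq 1$ because $\rdel(\lam)^1$ is the maximal submodule, so the coefficient of $\ch L(\mu)$ on the left-hand side of the sum formula is positive. Only finitely many terms on the right-hand side can contain $L(\mu)$ (one needs $\mu\leq\al\downarrow^{j}\lam$), so positivity forces $[\rdel(\al\downarrow^{j}\lam):L(\mu)]\neq 0$ for some $\al\in R(\lam)^+$ and some $j\geq 1$ --- note this gives $\mu$ itself as a composition factor, not merely some $\eta\geq\mu$. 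From there your height induction on $\lam-\mu$ and your closing observations about $\al\downarrow$ (orbit preservation, strict decrease, $\wW(\al\downarrow^{j}\lam)\cdot(\al\downarrow^{j}\lam)=\wW(\lam)\cdot\lam$) finish the argument exactly as in the paper; your treatment of cancellation and finiteness is fine once it is applied to the coefficients of $\ch L(\mu)$ instead of the coefficients of $e^\mu$.
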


\begin{proof}
The statement is obvious for $\lam =\mu$ and it is also clear that $[\rdel(\lam):L(\mu)] \neq 0$ implies $\mu \leq \lam$. We use induction on $\lam -\mu$ and assume $\mu < \lam$. If $[\rdel(\lam):L(\mu)] \neq 0$, then also $[\rdel(\lam)^1:L(\mu)] \neq 0$ since $\rdel(\lam)^1 \subset \rdel(\lam)$ is the maximal submodule. But then the restricted Jantzen sum formula implies that $L(\mu)$ has to appear as a subquotient in some $\rdel(\al \downarrow^n \lam)$ where $\al \in R(\lam)^+$ and $n>0$. Our induction assumption then implies $\mu \in \wW(\al\downarrow^n\lam) \cdot (\al\downarrow^n\lam)$. But the definition of $\al \downarrow \lam$ implies $\wW(\al\downarrow^n\lam) \cdot (\al\downarrow^n\lam)= \wW(\lam) \cdot (\lam)$.
\end{proof}

\section{Acknowledgements}

I would like to thank Peter Fiebig for drawing my attention to the calculation of the above Jantzen sum formula. I am also very grateful for his patience for explaining me the underlying theory.

\bibliographystyle{amsplain}

\end{document}